\numberwithin{equation}{section}
\newtheorem{theorem}{Theorem}[section]
\newtheorem{proposition}[theorem]{Proposition}
\newtheorem{remark}[theorem]{Remark}
\newtheorem{definition}{Definition}[section]
\newtheorem{claim}{Claim}[section]
\newtheorem{assumption}{Assumption}
\newtheorem{lemma}{Lemma}[section]
\newcommand{\<}{\langle}
\renewcommand{\>}{\rangle}
\renewcommand{\H}{\mathbb{H}}
\begin{document} 
\title{ Generalized spectrum of Steklov-Robin problem for elliptic system}



\author[Alzaki Fadlallah\hfil ]
{Alzaki.M.M. Fadlallah}

\address{Alzaki.M.M. Fadlallah \hfill\break
Department of Mathematics, University of Alabama at
Birmingham \hfill\break
Birmingham, Alabama 35294-1170, USA} 
\email{ zakima99@@math.uab.edu }


\begin{abstract}
We will study the generalized Steklov–Robin eigensystem (with possibly matrices weights) in which the spectral
parameter is both in the system and on the boundary. We prove the
existence of of an increasing unbounded sequence of eigenvalues 
. The method of proof makes use of variational arguments.
\end{abstract}
\maketitle

\section{Introduction}
\begin{equation}\label{Es2}
\begin{gathered}
 -\Delta U+ A(x)U =\mu M(x)U \quad\text{in } \Omega,\\
\frac{\partial U}{\partial \nu}+\Sigma(x)U=\mu P(x)U \quad\text{on }
\partial\Omega,
\end{gathered}
\end{equation}
where $\Omega\subset\mathbb{R}^N$ , $N\geq 2$ is a bounded domain  with boundary
$\partial \Omega$ of class $C^{0,1}$,

$U=[u_1,\dots,u_k]^{T}\in H(\Omega)$
and the matrix 
$$A(x)=\left[
\begin{array}{cccc}
 a_{11}(x)&a_{12}(x)&\cdots&a_{1k}(x)\\ 
a_{21}(x)&a_{22}(x)&\cdots&a_{2k}(x)\\ 
\vdots & \vdots  &\ddots &\vdots  \\ 
a_{k1}(x)&a_{k2}(x)&\cdots&a_{kk}(x)
\end{array}\right]
$$\\
Verifies the following conditions:
\begin{enumerate}

\item[$(A1$)] The functions $a_{ij}:\Omega\to\mathbb{R},$ $a_{ij}(x)\ge 0,~~ \forall~ i,j=1,\cdots,k, ~~x\in\Omega$
with strict inequality on a set of positive measure of $\Omega$.
 \item[$(A2)$] $A(x)$ is positive semidefinite matrix on $\mathbb{R}^{{k}\times{k}},$ almost everywhere $x\in\Omega,$ and 
 $A(x)$ is positive definite on a set of positive measure of $\Omega$ with $a_{ij}\in L^{p}(\Omega)~~ \forall ~i,j=1,\cdots,k,$ 
for $p>\frac{N}{2}$ when $N\geq 3$, and $p>1$ when $N=2$
 
 \end{enumerate}
 
$\partial/\partial\nu :=\nu\cdot\nabla$
is the outward (unit) normal derivative on $\partial\Omega$,
The matrix $\Sigma$ is 
$$\Sigma(x)=\left[
\begin{array}{cccc}
 \sigma_{11}(x)&\sigma_{12}(x)&\cdots&\sigma_{1k}(x)\\ 
\sigma_{21}(x)&\sigma_{22}(x)&\cdots&\sigma_{2k}(x)\\ 
\vdots & \vdots  &\ddots &\vdots  \\ 
\sigma_{k1}(x)&\sigma_{k2}(x)&\cdots&\sigma_{kk}(x)
\end{array}\right]
$$
Verifies the following conditions:\\

\begin{enumerate}

\item[$(S1)$] The functions $\sigma_{ij}:\partial\Omega\to\mathbb{R},$ $\sigma_{ij}(x)\ge 0,~~ \forall ~i,j=1,\cdots,k ~~x\in\partial\Omega$
\item[$(S2)$] $\Sigma(x)$ is positive semidefinite matrix on $\mathbb{R}^{{k}\times{k}},$ almost everywhere $x\in\partial\Omega,$
and  $\Sigma(x)$ is positive definite on a set of positive measure of $\partial\Omega$
  with $\sigma_{ij}\in L^{q}(\partial\Omega)~~ \forall~ i,j=1,\cdots,k,$ 
for $q\geq N-1$ when $N\geq 3$, and $q>1$ when $N=2$
\end{enumerate}

\begin{remark}
 \item Conditions $(A2),(S2)$ are equivalent to 
 $$\int_{\Omega}\<A(x)U,U\>\,dx+\int_{\partial\Omega}\<\Sigma(x)U,U\>\,dx > 0$$
\item We can put condition $(S2)$ in $A$ and condition $(A2)$ in $\Sigma$ i.e.;  interchange conditions in $A$ and $\Sigma$ 
 \end{remark}
Note that the eigensystem (\ref{Es2}) includes as special cases the weighted Steklov eigenproblem for a class of elliptic system when ( 
$\Sigma(x)=M(x)\equiv 0$, $A(x)\in\mathbb{R}^{2\times{2}}$ and  $P(x)=I$  where $I$ 
is identity matrix that was considered in \cite{GMR13}. For scalar  case 
generalized Steklov-Robin  spectrum of the scalar case that was considered in \cite{Mav12}
as well as weighted Robin-Neumann eigenproblem when ($P(x)\equiv 0$ and $M(x)=I$) that was considered in \cite{Auc12} and \cite{GMR13}. 
Under conditions of $A(x),\Sigma(x)$ together with the hypothesis on $\Omega$, we have 
\section{Notations definitions}
To put our results into the context, we have collected in this shore section some relevant notations and definitions for our purposes. 
Throughout this work, $H^{1}_{0},$ $H^{1}(\Omega)$ denotes the usual real Sobolev space of functions on $\Omega$. 
\begin{itemize}
 \item $k\in\mathbb{N}$
 \item $H(\Omega)=H^{1}(\Omega)\times H^{1}(\Omega)\times\cdots\times H^{1}(\Omega)\times H^{1}(\Omega)=[H^{1}(\Omega)]^{k}$
 \item $H_{0}(\Omega)=H^{1}_{0}(\Omega)\times H^{1}_{0}(\Omega)\times\cdots\times H^{1}_{0}(\Omega)\times H^{1}_{0}(\Omega)=[H^{1}_{0}(\Omega)]^{k}$
 \item $L^{p}_{k}(\Omega)=L^{p}(\Omega)\times L^{p}(\Omega)\times\cdots\times L^{p}(\Omega)\times L^{p}(\Omega)=[L^{p}(\Omega)]^{N}$
 \item  $L^{p}_{k}(\partial\Omega)=L^{p}(\partial\Omega)\times L^{p}(\partial\Omega)\times\cdots\times L^{p}(\partial\Omega)\times L^{p}(\partial\Omega)=[L^{p}(\partial\Omega)]^{k}$
\end{itemize}
\begin{definition}{\bf{Cooperative-plus}} 
Cooperative-plus matrix means that all the entries of the matrix are non-negative
\end{definition}
\begin{itemize}

 \item The embedding of $H^{1}(\Omega)$ into $L^{p}(\Omega)$ is continuous for $1\leq p\leq p(N)$ and compact for 
 $1\leq p\leq p(N)$ where $p(N)=\frac{2N}{N-2}$ if $N\geq 3$ and $p(N)=\infty$ if $N=2$ (see \cite{EG} for more details)
   \item $\<x,y\>=\sum_{i=1}^{k}x_iy_i~~\forall~ x,y\in H(\Omega)$ then 
   \begin{equation}\label{NAS}
\begin{gathered}
\<U,V\>_{(A,\Sigma)}=\int_{\Omega}\left[\triangledown U.\triangledown V+\<A(x)U,V\>\right]~~dx+\int_{\partial\Omega}\<\Sigma(x) U,V\>\,dx
\end{gathered}
\end{equation}
  
  defines an inner product for $H(\Omega)$, with associated norm $||.||_{(A,\Sigma)}$ 
  \begin{equation}\label{IAS}
\begin{gathered}
||U||_{(A,\Sigma)}=\int_{\Omega}\left[\triangledown U.\triangledown U+\<A(x)U,U\>\right]~~dx+\int_{\partial\Omega}\<\Sigma(x) U,U\>\,dx
\end{gathered}
\end{equation}
  
  this norm is equivalent to the standard $H(\Omega)-$norm
  \end{itemize}
  Claim: $<.,.>_{(A,\Sigma)}$  is inner product on $H(\Omega)$
  
  \begin{proof}
  Let $U, V$ and $W$  be vectors on $\in H(\Omega)$ and $\alpha$ be a scalar in $\mathbb{R}$, then:
  \begin{enumerate}
   \color{blue}{
   \item $$\<U+V,W\>_{(A,\Sigma)}=$$ $$\int_{\Omega}\left[\triangledown (U+V).\triangledown W+\<A(x)(U+V),W\>\right]~~dx+
   \int_{\partial\Omega}\<\Sigma(x) U+V,W\>\,dx$$
   $$=\int_{\Omega}\left[\triangledown U.\triangledown W+\<A(x)U,W\>\right]~~dx+\int_{\partial\Omega}\<\Sigma(x) U,W\>\,dx =$$$$
   \int_{\Omega}\left[\triangledown V.\triangledown W+\<A(x)V,W\>\right]~~dx+\int_{\partial\Omega}\<\Sigma(x) V,W\>\,dx=\<U,W\>_{(A,\Sigma)}+\<V,W\>_{(A,\Sigma)}$$
   \item $$\<\alpha U,W\>_{(A,\Sigma)}=\int_{\Omega}\left[\triangledown \alpha U.\triangledown W+\<A(x)\alpha U,W\>\right]\,dx+
   \int_{\partial\Omega}\<\alpha\Sigma(x) U,W\>\,dx=$$$$
   \alpha\left(\int_{\Omega}\left[\triangledown U.\triangledown W+\<A(x)U,W\>\right]+\int_{\partial\Omega}\<\sigma(x) U,W\>\,dx\right)
   =\alpha\<U,W\>_{(A,\Sigma)}$$
   \item Since $A,\Sigma$ are symmetric and the $\<x,y\>=\<y,x\>$ 
   $$\<U,V\>_{(A,\Sigma)}=\int_{\Omega}\left[\triangledown U.\triangledown V+\<A(x)U,V\>\right]+\int_{\partial\Omega}\<\Sigma(x) U,V\>\,dx
   $$$$=\int_{\Omega}\left[\triangledown U.\triangledown V+\<U,A(x)V\>\right]+\int_{\partial\Omega}\< U,\Sigma(x) V\>\,dx=$$$$
   \int_{\Omega}\left[\triangledown V.\triangledown U+\<A(x)V,U\>\right]+\int_{\partial\Omega}\< \Sigma(x) V,U\>\,dx=\<V,U\>_{(A,\Sigma)}$$ 
   \item Since we have that $\<A(x)U,U\>\geq 0~\forall~U\in H(\Omega)$ on $\Omega$ (semidefinite)
   and $\<\Sigma(x)U,U\>\geq 0~\forall~U\in H(\Omega)$ on $\partial\Omega$ (semidefinite)
   so we have that 
   $$\<U,U\>_{(A,\Sigma)}=\int_{\Omega}\left[\triangledown U.\triangledown U+\<A(x)U,U\>\right]=
   \int_{\Omega}\left[|\triangledown U|^{2}+\<A(x)U,U\>\right]+\int_{\partial\Omega}\< \Sigma(x) U,U\>\,dx\geq 0 $$
   if $$\int_{\Omega}\left[|\triangledown U|^{2}+\<A(x)U,U\>\right]+\int_{\partial\Omega}\< \Sigma(x) U,U\>\,dx=0$$
   then $$\<A(x)U,U\>=0$$ iff $U=0$ on $\Omega$,
   and $$\<\Sigma(x)U,U\>=0$$ iff $U=0$ on $\partial\Omega$. \\
   So, $\<U,U\>_{(A,\Sigma)}\geq 0,$  $\<U,U\>_{(A,\Sigma)}=0$ iff $U=0$ 
   }
    \end{enumerate}
   Indeed $\<.,.\>_{(A,\Sigma)}$ is inner product on $H(\Omega)$
   \end{proof}

\section{Generalized Steklov-Robin eigensystem}
In this section, we will study the generalized spectrum that will  be used for the comparison with the nonlinearities in the system (\ref{Es1}).
This spectrum includes the Steklov, Neumann and Robin spectra, We therefore generalize the results in \cite{Auc12}, and \cite{GMR13}\\.
Consider the elliptic system 
\begin{equation}\label{Es3}
\begin{gathered}
 -\Delta U+ A(x)U =\mu M(x)U \quad\text{in } \Omega,\\
\frac{\partial U}{\partial \nu}+\Sigma(x)U=\mu P(x)U \quad\text{on }
\partial\Omega,
\end{gathered}
\end{equation}
Where 
$$M(x)=\left[
\begin{array}{cccc}
 m_{11}(x)&m_{12}(x)&\cdots&m_{1k}(x)\\ 
m_{21}(x)&m_{22}(x)&\cdots&m_{2k}(x)\\ 
\vdots & \vdots  &\ddots &\vdots  \\ 
m_{k1}(x)&m_{k2}(x)&\cdots&m_{kk}(x)
\end{array}\right]
$$
and 
$$P(x)=\left[
\begin{array}{cccc}
 \rho_{11}(x)&\rho_{12}(x)&\cdots&\rho_{1k}(x)\\ 
\rho_{21}(x)&\rho_{22}(x)&\cdots&\rho_{2k}(x)\\ 
\vdots & \vdots  &\ddots &\vdots  \\ 
\rho_{k1}(x)&\rho_{k2}(x)&\cdots&\rho_{kk}(x)
\end{array}\right]
$$
\begin{enumerate}
 \item[(M1)] Where $M(x)$ is positive definite matrix on $\mathbb{R}^{{k}\times{k}}$, almost everywhere $x\in\Omega,$ 
The functions $m_{ij}:\Omega\to\mathbb{R},$ $m_{ij}(x)\ge 0,~~ \forall~ i,j=1,\cdots,k ~~x\in\Omega,$
with strict inequality on a set of positive measure of $\Omega$, $m_{ij}\in L^{p}(\Omega)~~ \forall~ i,j=1,\cdots,k,$ 
 for $p\geq \frac{N}{2}$ when $N\geq 3$, and $p>1$ when $N=2.$ 
\item[(P1)] Where $P(x)$ is positive semidefinite matrix on $\mathbb{R}^{{k}\times{k}},$ almost everywhere $x\in\partial\Omega,$ and positive define
 on a set of positive measure of $\partial\Omega$ with $\rho_{ij}\in L^{q}(\partial\Omega)~~ \forall i,j=1,\cdots,k,$ 
for $q\geq N-1$ when $N\geq 3$, and $q>1$ when $N=2.$
\item[(MP)] And $(m_{ij},\rho_{ij})>0$; that is  $m_{ij}(x)\geq 0$ a.e in $\Omega$, $\rho_{ij}(x)\geq 0$ a.e on $\partial\Omega$ for all $i,j=1,\cdots,k$
such that $$\int_{\Omega}m_{ij}(x)dx+\int_{\partial\Omega}\rho_{ij}(x)dx >0 ~\forall~i,j=1,\cdots,k,$$
 
\end{enumerate}
Assume that $A(x),~\Sigma(x),~M(x),~P(x)$ are Verifies the following assumption :\\
\begin{assumption}\label{ASMP}
 \begin{enumerate}
$A(x)$ is positive definite on a set of positive measure of $\Omega$ with $A(x)\in L^{p}(\Omega)$ 
for $p>\frac{N}{2}$ when $N\geq 3$, and $p>1$ when $N=2.$ 
\item [OR] 
  $\Sigma(x)$ is positive definite on a set of positive measure of $\partial\Omega$
  with $\Sigma(x)\in L^{q}(\partial\Omega)$
for $q\geq N-1$ when $N\geq 3$, and $q>1$ when $N=2$
\item[AND] 
$M(x)$ is positive definite on a set of positive measure of $\Omega$ with $M(x)\in L^{p}(\Omega)$ 
for $p>\frac{N}{2}$ when $N\geq 3$, and $p>1$ when $N=2.$ 
\item[OR]
$P(x)$ is positive definite on a set of positive measure of $\partial\Omega$
  with $P(x)\in L^{q}(\partial\Omega)$
for $q\geq N-1$ when $N\geq 3$, and $q>1$ when $N=2.$
\end{enumerate}
\end{assumption}
\begin{remark}\label{rm1}
 Since $A(x),~\Sigma(x),~M(x),~P(x)$ are satisfied $(A2),~ (S2),~ (M1), ~(P1)$ respectively and they are Cooperative-plus matrices,
 then we can write them in following from 
 (i.e.;  eigen-decomposition of a positive semi-definite matrix or diagonalization)
 $$A(x)=Q_{A}^{T}(x)D_{A}(x)Q_{A}(x).$$
$$\Sigma(x)=Q_{\Sigma}^{T}(x)D_{\Sigma}(x)Q_{\Sigma}(x).$$
$$M(x)=Q_{M}^{T}(x)D_{M}(x)Q_{M}(x).$$
$$P(x)=Q_{P}^{T}(x)D_{P}(x)Q_{P}(x).$$
Where $Q_{J}(x)^{T}Q_{J}(x)=I$ ($Q_{J}^{T}(x)=Q_{J}^{-1}(x)$ i.e.; are orthogonal matrix ) are the normalized eigenvectors,  $I$ is identity matrix, $D_{J}(x)$ is diagonal
matrix in the diagonal of $D_{J}(x)$ the eigenvalues of $J(x)$ (i.e.; $D(x)_{J}=diag\left(\lambda_{1}^{J}(x),\cdots,\lambda_{k}^{J}(x)\right)$)
and $J(x)=\{A(x),~\Sigma(x),~M(x),~P(x)\}$
 \end{remark}

\begin{remark}
 The weight matrices $M(x)$ and $P(x)$ may vanish on subset of positive measure. 
\end{remark}
\begin{definition}
 The generalized Steklov-Robin eigensystem is to find a pair $(\mu,\varphi)\in\mathbb{R}\times H(\Omega)$ with 
 $\varphi\not\equiv 0$ such that 
 \begin{equation}\label{Es4}
\begin{gathered}
 \int_{\Omega}\triangledown\varphi.\triangledown U\,dx+\int_{\Omega}\<A(x)\varphi,U\>\,dx+\int_{\partial\Omega}\<\Sigma(x)\varphi,U\>\,dx= \\
 \mu\left[\int_{\Omega}\<M(x)\varphi,U\>\,dx+\int_{\partial\Omega}\<P(x)\varphi,U\>\,dx\right] ~~\forall~~U\in H(\Omega)
\end{gathered}
\end{equation}
\end{definition}
\begin{remark}
 Let $U=\varphi$ in (\ref{Es4}), if there is such an eigenpair, then 
$\mu>0$ and 
$$\int_{\Omega}\<M(x)\varphi,\varphi\>\,dx+\int_{\partial\Omega}\<P(x)\varphi,\varphi\>\,dx >0$$
$$\left(i.e; \sum_{j=1}^{k}\sum_{i=1}^{k}\left(\int_{\Omega}m_{ij}(x)\varphi_{j}\varphi_{i}\,dx+\int_{\partial\Omega}\rho_{ij}(x)\varphi_{j}\varphi_{i}\,dx\right)>0\right)$$ and 
\end{remark}
\begin{remark}
If $\int_{\Omega}\<M(x)\varphi,\varphi\>\,dx+\int_{\partial\Omega}\<P(x)\varphi,\varphi\>\,dx =0.$
Then 
$$\int_{\Omega}|\triangledown\varphi|^{2}\,dx+\int_{\Omega}\<A(x)\varphi,\varphi\>\,dx+\int_{\partial\Omega}\<\Sigma(x)\varphi,\varphi\>\,dx=0$$
We have that 
$\int_{\Omega}|\triangledown\varphi|^{2}\,dx=0$ this implies that $\varphi={\rm~ constant}$
and 

$\int_{\Omega}\<A(x)\varphi,\varphi\>\,dx=0$ this implies that $A(x)=0,~a.e.$  in $\Omega$ and 
$\int_{\partial\Omega}\<\Sigma(x)\varphi,\varphi\>\,dx =0,$ then  $\Sigma(x)=0~a.e$ on $\partial\Omega.$
So we have that, $\varphi$ would be a constant vector function; which would contradict the assumptions (assumption \ref{ASMP})  imposed on 
$A(x)$ and $\Sigma(x)$
\end{remark}
\begin{remark}
If $A(x)\equiv 0$ and $\Sigma\equiv 0$ then $\mu=0$ is an eigenvalue of the system (\ref{Es3}) with eigenfunction 
$\varphi={\rm constant~ vector ~function}$ on $\bar{\Omega},$
\end{remark}
It is therefore appropriate to consider the closed linear subspace of $H(\Omega)$ under assumption \ref{ASMP} defined by 
$$\H_{(M,P)}(\Omega):=\{U\in H(\Omega):\int_{\Omega}\<M(x)U,U\>\,dx+\int_{\partial\Omega}\<P(x)U,U\>\,dx=0\}.$$
Now all the  eigenfunctions associated with (\ref{Es4}) belongs  to the $(A,\Sigma)-$orthogonal complement $H_{(M,P)}(\Omega):=[\H_{(M,P)}(\Omega)]^{\perp}$ 
$$i.e.;H_{(M,P)}(\Omega)=[\H_{(M,P)}(\Omega)]^{\perp}=$$
$$\{U\in H(\Omega):\int_{\Omega}\<M(x)U,U\>\,dx+\int_{\partial\Omega}\<P(x)U,U\>\,dx>0~{\rm and} ~~\<U,V\>_{(A,\Sigma)}=0, \forall~V\in \H_{(M,P)}(\Omega)\}$$
of this subspace in $H(\Omega)$
\begin{definition}
 $\Omega(M):=\{x\in\Omega:M(x) >0\}.$ \\ 
  $\partial\Omega(P):=\{x\in\partial\Omega:P(x) >0\}.$ 
\end{definition}
We will show that indeed the  $H_{(M,P)}(\Omega)$ is subspace of $H(\Omega).$
Let $U,~ V\in H_{(M.P)}(\Omega)$ and $\alpha\in\mathbb{R}$ we show that $\alpha U\in H_{(M.P)}(\Omega)$ and $U+V\in H_{(M.P)}(\Omega) $
$$\left(\int_{\Omega}\< M(x)(\alpha U),\alpha U\>\,dx+\int_{\partial\Omega}\<P(x)(\alpha U),\alpha  U\>\,dx\right)=$$
$$\alpha^{2}\left(\int_{\Omega}\<M(x)U,U\>\,dx+\int_{\partial\Omega}\<P(x)U,U\>\,dx\right)=^{U\in H_{(M.P)}}0$$
Therefore $\alpha U \in H_{(M.P)}(\Omega)$. Now we show that $U+V\in H_{(M.P)}(\Omega) $
$$\int_{\Omega}\< M(x)(U+V),(U+V)\>\,dx+\int_{\partial\Omega}\<P(x)(U+V),(U+V)\>\,dx=$$
$$\int_{\Omega}\<M(x)U,U\>\,dx+\int_{\partial\Omega}\<P(x)U,U\>\,dx+\int_{\Omega}\<M(x)V,V\>\,dx+\int_{\partial\Omega}\<P(x)V,V\>\,dx$$
$$+2\int_{\Omega}\< M(x)U,V\>\,dx+2\int_{\partial\Omega}\<P(x)U,V)\>\,dx$$
{\bf{Case 1}}\\
If $x\in  \partial\Omega(P),$  this implies that   $U=0$ on $\partial\Omega(P)$
\begin{itemize}
\item $M(x)=0$  on $\Omega$ (Steklov problem)
\item $M(x)>0$ on set of positive measure of $\Omega$ this implies that $U=0$ on $\Omega$
Now we use  and \ref{ASMP}, \ref{rm1} we have that 
$$0=\int_{\Omega}\<M(x)U,U\>\,dx=\int_{\Omega}\<Q^{T}_{M}(x)D_{M}(x)Q_{M}(x)U,U\>\,dx=
\int_{\Omega}\<D_{M}(x)Q_{M}(x)U,Q_{M}(x)U\>\,dx
$$ 
Since $D_{M}(x)>0$ this implies that $Q_{M}(x)U=0$ since $Q_{M}(x)$ is invertible this implies that 
$U=0$ on $\Omega(M)$
\end{itemize}
{\bf{Case 2}}\\
If $x\in \Omega(M),$  this implies that   $U=0$ on $\Omega$
\begin{itemize}
\item $P(x)=0$  on $\partial\Omega$ (usual spectrum problem ) 
\item $P(x)>0$ on set of positive measure of $\partial\Omega$ this implies that $U=0$ on $\partial\Omega$
Now we use  and \ref{ASMP}, \ref{rm1} we have that 
$$0=\int_{\partial\Omega}\<P(x)U,U\>\,dx=\int_{\partial\Omega}\<Q^{T}_{P}(x)D_{P}(x)Q_{P}(x)U,U\>\,dx=
\int_{\partial\Omega}\<D_{P}(x)Q_{P}(x)U,Q_{P}(x)U\>\,dx
$$ 
Since $D_{P}(x)>0$ this implies that $Q_{P}(x)U=0$ since $Q_{P}(x)$ is invertible this implies that 
$U=0$ on $\Omega(P)$
\end{itemize}
Therefor $U+V\in \H_{(M,P)}(\Omega), $ so we have that $\H_{(M,P)}(\Omega)$ subspace of $H(\Omega).$ 
So that 
$$\H_{(M,P)}(\Omega):=\{U\in H(\Omega):U=0~a.e.;~{\rm in}~\Omega(M)~{\rm and}~\Gamma U=0~a.e.;{\rm on}~~ \partial\Omega(P)\}$$
\begin{remark}.\\
\begin{enumerate}
 \item If $M(x)\equiv 0$ in $\Omega$ and $x\in\partial\Omega(P)$, then the subspace $\H_{(M,P)}(\Omega)=H_{0}(\Omega)$
\item If $P(x)\equiv 0$ in $\partial\Omega$ and $x\in\Omega(M)$, then the subspace $\H_{(M,P)}(\Omega)=\{0\}$
\end{enumerate}
\end{remark}
Thus, one can split the Hilbert space $H(\Omega)$ as a driect $(A,\Sigma)-$orthogonal sum in the following way 
$$H(\Omega)=\H_{(M,P)}(\Omega)\oplus_{(A,\Sigma)}[\H_{(M,P)}(\Omega)]^{\perp}$$
\begin{remark}
 If $(\mu,\varphi)\in\mathbb{R}\times H(\Omega)$ is an eigenpair of \ref{Es4}, then it follows from the definition of $H_{0}(\Omega)$
that 
$$<\varphi,V>_{(A,\Sigma)}=
\int_{\Omega}[\triangledown\varphi.\triangledown V+\<A(x)\varphi,V\>]\,dx
+\int_{\partial\Omega}\<\Sigma(x)\varphi,V\>\,dx=0,~~~$$
$\forall ~V\in \H_{(M,P)}(\Omega)$ that is, $\varphi\in [\H_{(M,P)}(\Omega)]^{\bot} $
\end{remark}
\begin{itemize}
  \item We shall make use in what follows the real Lebesgue space $L^{q}_{k}(\partial\Omega)$ for $1\leq q\leq\infty$, and 
of the continuity and compactness of the trace operator. 
$$\Gamma :H(\Omega)\to L^{q}_{k}(\partial\Omega)~~for~~1\leq q<\frac{2(n-1)}{n-2}$$
 is well-defined it is a Lebesgue integrable function  with respect to Hausdorff $N-1$ dimensional measure, 
 sometime we will just use $U$ in place of $\Gamma U$ when considering the trace of function on $\partial\Omega$
Throughout  this work we denote the $L^{2}_{N}(\partial\Omega)-$ inner product by 
$$\<U,V\>_{\partial}:=\int_{\partial\Omega}U.V\,dx$$ and  the associated norm by 
$$||U||^{2}_{\partial}:=\int_{\partial\Omega}U.U~\forall U,V\in H(\Omega)$$
(see \cite{KJF77}, \cite{Nec67} and the references therein for more details )
  \item The trace mapping $\Gamma:H(\Omega)\to L^{2}_{N}(\partial\Omega)$ is compact (see \cite{Gri})
  \item 
  \begin{equation}\label{IMP}
\begin{gathered}
\<U,V\>_{(M,P)}=\int_{\Omega}\<M(x)U,V\>\,dx+\int_{\partial\Omega}\<P(x) U,V\>\,dx
\end{gathered}
\end{equation}
    defines an inner product for $H(\Omega)$, with associated norm 
  \begin{equation}\label{NMP}
\begin{gathered}
  ||U||_{(M,P)}^{2}:=\int_{\Omega}\<M(x)U,U\>\,dx+\int_{\partial\Omega}\<P(x) U,U\>\,dx
\end{gathered}
\end{equation}
\end{itemize}
\section{Preliminary Results}
  In this section we study some auxiliary results, which will be need in the sequel for the proof of our main results.
   Using the H\"{o}lder inequality, the continuity of the trace operator, the Sobolev embedding theorem and lower semicontinuity of 
 $||.||_{(A,\Sigma)}$, we deduce that $||.||_{(A,\Sigma)}$ (see (\ref{NAS})) is equivalent to standard norm $H(\Omega)-$norm. 
 This observation enables us to prove the existence of an unbounded and discrete spectrum for the Steklov-Robin eigensystem (\ref{Es3}), and 
 discuss some of its properties.\\
 \begin{definition}
 Define the functionals 
 $$\Lambda_{A,\Sigma}:H(\Omega)\to[0,\infty)$$ defined by 
   $$\Lambda_{A,\Sigma}(U):=\int_{\Omega}\left[\triangledown U.\triangledown U+\<A(x)U,U\>\right]~~dx+\int_{\partial\Omega}\<\Sigma(x) U,U\>\,dx=||U||_{(A\Sigma)}^{2},~~\forall~~U\in H(\Omega)$$
   $$\Upsilon_{M,P}:H(\Omega)\to[1,\infty)$$ defined by 
   $$\Upsilon_{M,P}:=\int_{\Omega}\<M(x)U,U\>\,dx+\int_{\partial\Omega}\<P(x)U,U\>\,dx-1=||U||_{(M,P)}^{2}-1,~\forall ~U\in H(\Omega)$$

   \end{definition}

   \begin{lemma}\label{ls}
   Suppose $(A2,~S2,~C1-C3),$ and assumption\ref{ASMP} are holds then the functionals
   $\Lambda_{A,\Sigma}$ and $\Upsilon_{M,P}$ are $C^{1}-$ functional (i.e.; continuous differentiable) 
\begin{proof}
  We compute $\Lambda_{A,\Sigma}'(U)V.$ Let $U,V\in H(\Omega)$  We have that
  $$\Lambda_{A,\Sigma}(U+tV)=\int_{\Omega}|\triangledown (U+tV)|^2\,dx+\int_{\Omega}\<A(x)(U+tV),U+tV\>\,dx
+\int_{\partial\Omega}\<\Sigma(x)(U+tV),U+tV\>\,dx=
$$
$$\int_{\Omega}(|\triangledown U|^2+2t\triangledown U.\triangledown V\,dx+t^2|\triangledown V|^2)\,dx+
\int_{\Omega}\<A(x)U,U\>\,dx+2t\int_{\Omega}\<A(x)U,V\>\,dx+t^{2}\int_{\Omega}\<A(x)V,V\>\,dx +$$
$$\int_{\partial\Omega}\<\Sigma(x)U,U\>+2t<\Sigma U,V\>+t^{2}\<\Sigma(x)V,V\>\,dx$$ 
$$\Lambda_{A,\Sigma}(U)=\int_{\Omega}|\triangledown U|^2\,dx+\int_{\Omega}\<A(x)U,U\>\,dx
+\int_{\partial\Omega}\<\Sigma(x)U,U\>\,dx,
$$
  
{\begin{equation}\label{s5}
\begin{gathered}
\lim_{t\to 0}t^{-1}[(\Lambda_{A,\Sigma}(U+tV)-\Lambda_{A,\Sigma}(U)]=\\
2\left(\int_{\Omega}\triangledown U.\triangledown V \,dx+\int_{\Omega}\<A(x)U,V\> \,dx+\int_{\partial\Omega}\<\Sigma U,V\>\,dx\right).\\
\therefore\Lambda_{A,\Sigma}'(U)V=2\<U,V\>_{(A,\Sigma)}
,~~\forall~~ U,V\in H(\Omega) \\
\end{gathered}
\end{equation}}
Now you compute $\Upsilon_{M,P}'(U)V,$ let $ U,V\in H(\Omega)$
$$\Upsilon_{M,P}(U+tV)=\int_{\Omega}\<M(x)(U+tV),U+tv\>\,dx+\int_{\partial\Omega}\<P(x)(U+tV),U+tV\>\,dx-1=$$
$$t\int_{\Omega}\<M(x)U,U\>\,dx+2t\int_{\Omega}\<M(x)U,V\>\,dx+t^{2}\int_{\Omega}\<M(x)V,V\>\,dx$$ $$
+\int_{\partial\Omega}\<P(x)U,U\>\,dx+2t\int_{\partial\Omega}\<P(x)U,V\>\,dx+t^{2}\int_{\partial\Omega}\<P(x)V,V\>\,dx-1.$$
\begin{equation}\label{6}
\begin{gathered}
\lim_{t\to 0}t^{-1}[\Upsilon_{M,P}(U+tV)-\Upsilon_{M,P}(U)]=2\<U,V\>_{M,P}
\therefore\Upsilon_{M,P}'(U)V=2\<U,V\>_{M,P},~~\forall~~ U,V\in H(\Omega)
\end{gathered}
\end{equation}
Now we will prove that $\Lambda_{A,\Sigma}'(U)V,~\Upsilon_{M,P}'(U)V$ are continuous functionals. \\
Let $U_l\to U$ in $H(\Omega)$, we show that $||\Lambda_{A,\Sigma}'(U_l)-\Lambda_{A,\Sigma}'(U)||_{\mathbb{L}(H(\Omega),\mathbb{R})}\to 0$ as $l\to\infty,$
and  $||\Upsilon_{M,P}'(U_l)-\Upsilon_{M,P}'(U)||_{\mathbb{L}(H(\Omega),\mathbb{R})}\to 0$ as $l\to\infty$ where 
$$\mathbb{L}(H(\Omega),\mathbb{R}):=\{\xi:H(\Omega)\to\mathbb{R}: \xi\in C(H(\Omega),\mathbf{R})\}$$
i.e.; ${\mathbb{L}(H(\Omega),\mathbb{R})}$  the set of all continuous functional from
$H(\Omega)$   to $\mathbb{R},$
since we know that 
$$||\Lambda_{A,\Sigma}'(U_l)-\Lambda_{A,\Sigma}'(U)||_{\mathbb{L}(H(\Omega),\mathbb{R})}=
\displaystyle{\sup_{||V||_{(A,\Sigma)}=1}|\Lambda_{A,\Sigma}'(U_l)V-\Lambda_{A,\Sigma}'(U)V|,~~\forall~~ U,V \in H(\Omega)}$$
$$|\Lambda_{A,\Sigma}'(U_l)V-\Lambda_{A,\Sigma}'(U)V|=$$ $2\left| 
\left(\int_{\Omega}\triangledown U_{l}.\triangledown V \,dx+\int_{\Omega}\<A(x)U_{l},V\> \,dx+\int_{\partial\Omega}\<\Sigma U_{l},V\>\,dx\right)
-\left(\int_{\Omega}\triangledown U.\triangledown V \,dx+\int_{\Omega}\<A(x)U,V\> \,dx+\int_{\partial\Omega}\<\Sigma U,V\>\,dx\right)
\right|
$
$$|\Lambda_{A,\Sigma}'(U_l)V-\Lambda_{A,\Sigma}'(U)V|\leq$$
$$ \int_{\Omega}|\triangledown U_{l}-\triangledown U|.|\triangledown V| \,dx
+\int_{\Omega}{\<A(x)(U_{l}-U),V\>}| \,dx+\int_{\partial\Omega}|\<\Sigma (U_{l}-U),V\>|\,dx$$
$$= \int_{\Omega}|\triangledown U_{l}-\triangledown U|.|\triangledown V| \,dx
+\int_{\Omega}|\sum_{i=1}^{k}(A(x)(U_{l}-U))_{j}v_i| \,dx+\int_{\partial\Omega}|\sum_{i=1}^{k}(\Sigma (U_{l}-U))_{j}v_{i}|\,dx$$
$$\leq \int_{\Omega}|\triangledown U_{l}-\triangledown U|.|\triangledown V| \,dx
+\int_{\Omega}\sum_{i=1}^{k}|(A(x)(U_{l}-U))_{j}v_i| \,dx+\int_{\partial\Omega}\sum_{i=1}^{k}|(\Sigma (U_{l}-U))_{j}v_{i}|\,dx $$
$$=\int_{\Omega}|\triangledown U_{l}-\triangledown U|.|\triangledown V| \,dx
+\sum_{i=1}^{k}\int_{\Omega}|(A(x)(U_{l}-U))_{j}v_i| \,dx+\sum_{i=1}^{k}\int_{\partial\Omega}|(\Sigma (U_{l}-U))_{j}v_{i}|\,dx $$
Where $$(A(x)(U_{l}-U))_{j}=\sum_{j=1}^{k}a_{ij}(x)(u_{lj}-u_{j})$$, and 
$$(\Sigma(x)(U_{l}-U))_{j}=\sum_{j=1}^{k}\sigma_{ij}(x)(u_{lj}-u_{j})$$

where 
$U=\{u_{1}\cdots,u_{k}\}$,  $U_{l}=\{u_{l1}\cdots,u_{lk}\},$ and $V=\{v_{1}\cdots,v_{k}\}$ 
so we have that 
$$\frac{1}{2}|\Lambda_{A,\Sigma}'(U_k)V-\Lambda_{A,\Sigma}'(U)V|\leq$$
$$\leq
\int_{\Omega}|\triangledown U_{l}-\triangledown U|.|\triangledown V| \,dx$$
$$+\sum_{j=1}^{k}\sum_{i=1}^{k}\int_{\Omega}|a_{ij}(x)(u_{lj}-u_{j})v_i| \,dx
+\sum_{j=1}^{k}\sum_{i=1}^{k}\int_{\partial\Omega}|\sigma_{ij}(x)(u_{lj}-u_{j})v_{i}|\,dx $$
$$\leq ^{ Holder~~ inequality} \left(\int_{\Omega}|\triangledown U_{l}-\triangledown U|^{2}\right)^{\frac{1}{2}}.
\left(\int_{\Omega}||\triangledown V|^{2}\,dx\right)^{\frac{1}{2}} $$
$$+\sum_{j=1}^{k}\sum_{i=1}^{k}\left(\int_{\Omega}|a_{ij}(x)||(u_{lj}-u_{j})|^{2}\,dx\right)^{\frac{1}{2}}
\left(\int_{\Omega}|a_{ij}(x)|v_i|^{2}\,dx\right)^{\frac{1}{2}}$$ $$
+\sum_{j=1}^{k}\sum_{i=1}^{k}\left(\int_{\partial\Omega}\sigma_{ij}(x)|u_{lj}-u_{j}|^{2}\,dx\right)^{\frac{1}{2}}
\left(\int_{\partial\Omega}\sigma_{ij}(x)|v_{i}|^{2}\,dx\right)^{\frac{1}{2}}
$$
So we have that 
$$|\Lambda_{A,\Sigma}'(U_l)V-\Lambda_{A,\Sigma}'(U)V|\leq2||U_{l}-U||_{(A,\Sigma)}||V||_{(A,\Sigma)}\to0,~~{\rm as~}l\to\infty$$
then, $||\Lambda_{A,\Sigma}'(U_l)-\Lambda_{A,\Sigma}'(U)||_{\mathbb{L}(H(\Omega),\mathbb{R})}\to 0$ as $l\to\infty$ so that 
$\Lambda_{A,\Sigma}'(U)$ is continuous functional.
similar argument we can prove that $\Upsilon_{M,P}'(U)V$ is  continuous functional. 
\end{proof}
\end{lemma}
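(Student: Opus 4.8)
The plan is to treat both functionals as the quadratic forms attached to the two bilinear forms already introduced, namely $\langle\cdot,\cdot\rangle_{(A,\Sigma)}$ from \eqref{NAS} and $\langle\cdot,\cdot\rangle_{(M,P)}$ from \eqref{IMP}, and then to deduce $C^{1}$-regularity from the continuity of these bilinear forms with respect to the $(A,\Sigma)$-norm. Since $\Lambda_{A,\Sigma}(U)=\|U\|_{(A,\Sigma)}^{2}$ and $\Upsilon_{M,P}(U)=\|U\|_{(M,P)}^{2}-1$, each is a second-degree polynomial in $U$, so their directional derivatives come out cleanly: expanding $\Lambda_{A,\Sigma}(U+tV)$ and $\Upsilon_{M,P}(U+tV)$ produces a constant term, a term linear in $t$, and a term in $t^{2}$, and forming the difference quotient and letting $t\to 0$ kills the quadratic term. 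This yields $\Lambda_{A,\Sigma}'(U)V=2\langle U,V\rangle_{(A,\Sigma)}$ and $\Upsilon_{M,P}'(U)V=2\langle U,V\rangle_{(M,P)}$ (the constant $-1$ in $\Upsilon_{M,P}$ contributing nothing).

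For each fixed $U$ these are linear maps in $V$; that they are bounded — hence genuine elements of the dual space $\mathbb{L}(H(\Omega),\mathbb{R})$, so that the functionals are Gateaux differentiable — follows for $\Lambda_{A,\Sigma}$ directly from the Cauchy--Schwarz inequality in the Hilbert space $\bigl(H(\Omega),\langle\cdot,\cdot\rangle_{(A,\Sigma)}\bigr)$, and for $\Upsilon_{M,P}$ from an estimate of the shape $|\langle W,V\rangle_{(M,P)}|\le C\,\|W\|_{(A,\Sigma)}\,\|V\|_{(A,\Sigma)}$. To upgrade Gateaux to continuous Fréchet differentiability, the crucial structural simplification is that each derivative is \emph{linear} in $U$, a consequence of the quadratic form. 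Hence $\Lambda_{A,\Sigma}'(U_{l})V-\Lambda_{A,\Sigma}'(U)V=2\langle U_{l}-U,V\rangle_{(A,\Sigma)}$ and $\Upsilon_{M,P}'(U_{l})V-\Upsilon_{M,P}'(U)V=2\langle U_{l}-U,V\rangle_{(M,P)}$.

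Taking the supremum over $\|V\|_{(A,\Sigma)}=1$ and applying Cauchy--Schwarz then gives the operator-norm bound $\|\Lambda_{A,\Sigma}'(U_{l})-\Lambda_{A,\Sigma}'(U)\|\le 2\,\|U_{l}-U\|_{(A,\Sigma)}$, which tends to $0$ whenever $U_{l}\to U$, and the analogous bound $\|\Upsilon_{M,P}'(U_{l})-\Upsilon_{M,P}'(U)\|\le 2C\,\|U_{l}-U\|_{(A,\Sigma)}$. Continuity of the two derivative maps $U\mapsto\Lambda_{A,\Sigma}'(U)$ and $U\mapsto\Upsilon_{M,P}'(U)$ from $H(\Omega)$ into $\mathbb{L}(H(\Omega),\mathbb{R})$, combined with Gateaux differentiability, is exactly what certifies that both functionals are $C^{1}$.

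The main obstacle is establishing the boundedness constant $C$ for the $(M,P)$-pairing relative to the $(A,\Sigma)$-norm, since the weights $M$ and $P$ carry only the limited integrability of (M1) and (P1) and may vanish on sets of positive measure. Here I would expand $\langle W,V\rangle_{(M,P)}$ entrywise as $\sum_{i,j}\int_{\Omega}m_{ij}w_{j}v_{i}\,dx+\sum_{i,j}\int_{\partial\Omega}\rho_{ij}w_{j}v_{i}\,dx$ and bound each summand by a three-factor Hölder inequality, placing $m_{ij}\in L^{p}(\Omega)$ with $p>\tfrac{N}{2}$ and $\rho_{ij}\in L^{q}(\partial\Omega)$ with $q\ge N-1$ into the first slot. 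The remaining $w_{j},v_{i}$ factors are then absorbed by the continuous Sobolev embedding $H^{1}(\Omega)\hookrightarrow L^{p(N)}(\Omega)$ in the interior and by the continuity of the trace operator $\Gamma\colon H(\Omega)\to L^{q}_{k}(\partial\Omega)$ on the boundary. The delicate bookkeeping is checking that the conjugate exponents demanded of $w_{j}$ and $v_{i}$ fall strictly within the admissible embedding ranges; one finds that $p>\tfrac{N}{2}$ and $q\ge N-1$ are precisely the thresholds that make this possible, after which the two continuity estimates close the argument and the equivalence of $\|\cdot\|_{(A,\Sigma)}$ with the standard $H(\Omega)$-norm lets the conclusion be stated in either norm.
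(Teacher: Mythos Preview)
Your proposal is correct and follows essentially the same route as the paper: expand the quadratic forms to read off the Gateaux derivatives $\Lambda_{A,\Sigma}'(U)V=2\langle U,V\rangle_{(A,\Sigma)}$ and $\Upsilon_{M,P}'(U)V=2\langle U,V\rangle_{(M,P)}$, then exploit linearity in $U$ together with a Cauchy--Schwarz/H\"older bound to obtain $\|\Lambda_{A,\Sigma}'(U_l)-\Lambda_{A,\Sigma}'(U)\|\le 2\|U_l-U\|_{(A,\Sigma)}$ and the analogous estimate for $\Upsilon_{M,P}$. The only cosmetic difference is that the paper carries out the H\"older step componentwise and then simply says ``similar argument'' for $\Upsilon_{M,P}$, whereas you invoke Cauchy--Schwarz abstractly and spell out the Sobolev/trace exponent check needed for the $(M,P)$-pairing; both arrive at the same conclusion.
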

   \begin{lemma}\label{ls1}
   Suppose $(A2,S2,C1-C3)$, and assumption\ref{ASMP} are holds then the functional
   $\Lambda_{A,\Sigma}$ is convex. 
      \end{lemma}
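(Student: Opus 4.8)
The plan is to recognize that $\Lambda_{A,\Sigma}(U)=\|U\|_{(A,\Sigma)}^{2}$ is nothing but the square of the Hilbert norm induced by the inner product $\<\cdot,\cdot\>_{(A,\Sigma)}$ whose axioms were verified at the end of Section~2. Thus the claim reduces to the elementary fact that the squared norm of an inner-product space is a convex functional, and I would carry this out by direct expansion. First I would fix $U,V\in H(\Omega)$ and $t\in[0,1]$ and expand $\Lambda_{A,\Sigma}(tU+(1-t)V)$ using the bilinearity and symmetry of $\<\cdot,\cdot\>_{(A,\Sigma)}$. Grouping the quadratic and cross terms yields the identity
$$t\,\Lambda_{A,\Sigma}(U)+(1-t)\Lambda_{A,\Sigma}(V)-\Lambda_{A,\Sigma}(tU+(1-t)V)=t(1-t)\,\|U-V\|_{(A,\Sigma)}^{2}\geq 0,$$
which is precisely the convexity inequality $\Lambda_{A,\Sigma}(tU+(1-t)V)\leq t\,\Lambda_{A,\Sigma}(U)+(1-t)\Lambda_{A,\Sigma}(V)$.

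Alternatively, since Lemma~\ref{ls} already furnishes the derivative $\Lambda_{A,\Sigma}'(U)V=2\<U,V\>_{(A,\Sigma)}$, I could instead verify the first-order characterization of convexity for $C^{1}$ functionals, namely
$$\Lambda_{A,\Sigma}(V)\geq\Lambda_{A,\Sigma}(U)+\Lambda_{A,\Sigma}'(U)(V-U),$$
where expanding the right-hand side and collecting terms collapses the difference to $\|V-U\|_{(A,\Sigma)}^{2}\geq 0$. A third, fully equivalent route is to check monotonicity of the gradient, which again follows immediately from Lemma~\ref{ls}:
$$\big(\Lambda_{A,\Sigma}'(U)-\Lambda_{A,\Sigma}'(V)\big)(U-V)=2\,\<U-V,U-V\>_{(A,\Sigma)}=2\,\|U-V\|_{(A,\Sigma)}^{2}\geq 0.$$

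There is no genuine conceptual obstacle here: all three routes rest on the positive semidefiniteness encoded in conditions $(A2)$ and $(S2)$, which is exactly what makes $\<\cdot,\cdot\>_{(A,\Sigma)}$ a bona fide inner product and hence guarantees $\|U-V\|_{(A,\Sigma)}^{2}\geq 0$. The only point demanding a little care is that the mixed integrals assemble correctly into the cross term $\<U,V\>_{(A,\Sigma)}$; this requires invoking the symmetry $\<A(x)U,V\>=\<U,A(x)V\>$ and $\<\Sigma(x)U,V\>=\<U,\Sigma(x)V\>$, which was already established when the inner-product axioms were checked. I would present the first (direct) computation as the cleanest self-contained argument, and remark that the gradient-monotonicity version follows at once from the derivative formula of the preceding lemma.
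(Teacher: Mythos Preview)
Your proposal is correct, and the argument is essentially of the same elementary flavor as the paper's, but the route differs in detail. The paper does not expand via bilinearity of the inner product; instead it applies the triangle inequality $\|tU+(1-t)V\|_{(A,\Sigma)}\leq t\|U\|_{(A,\Sigma)}+(1-t)\|V\|_{(A,\Sigma)}$, squares, and then uses the AM--GM estimate $2t(1-t)\|U\|_{(A,\Sigma)}\|V\|_{(A,\Sigma)}\leq t(1-t)\bigl(\|U\|_{(A,\Sigma)}^{2}+\|V\|_{(A,\Sigma)}^{2}\bigr)$ to reach $t\Lambda_{A,\Sigma}(U)+(1-t)\Lambda_{A,\Sigma}(V)$. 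Your direct bilinear expansion yielding the exact deficit $t(1-t)\|U-V\|_{(A,\Sigma)}^{2}$ is cleaner and more informative, since it simultaneously gives strict convexity for $U\neq V$; the paper's two-step estimate loses that information. Either way, the only substantive input is the positive semidefiniteness from $(A2)$ and $(S2)$ that makes $\langle\cdot,\cdot\rangle_{(A,\Sigma)}$ an inner product, which you correctly identify.
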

      \begin{proof}
       $$\Lambda_{A,\Sigma}(tU+(1-t)V)=||tU+(1-t)V)||_{(A,\Sigma)}^{2}\leq\left(||tU||_{(A,\Sigma)}+||(1-t)V)||_{(A,\Sigma)}\right)^{2}$$
       $$\leq t^{2}||U||_{(A,\Sigma)}^{2}+2t(1-t)||U||_{(A,\Sigma)}||V||_{(A,\Sigma)}+(1-t)^{2}||V||_{(A,\Sigma)}^{2}$$
       $$\leq t^{2}||U||_{(A,\Sigma)}^{2}+t(1-t)\left[||U||_{(A,\Sigma)}^{2}+||V||_{(A,\Sigma)}^{2}\right]+(1-t)^{2}||V||_{(A,\Sigma)}^{2}$$
       $$\leq t^{2}||U||_{(A,\Sigma)}^{2}+t||U||_{(A,\Sigma)}^{2}-t^{2}||U||_{(A,\Sigma)}^{2}+t||V||_{(A,\Sigma)}^{2})-t^{2}||V||_{(A,\Sigma)}^{2}
       +||V||_{(A,\Sigma)}^{2}-2t||V||_{(A,\Sigma)}^{2}+t^{2}||V||_{(A,\Sigma)}^{2}$$
       $$=t ||U||_{(A,\Sigma)}^{2}-t||V||_{(A,\Sigma)}^{2}+||V||_{(A,\Sigma)}^{2}=t||U||_{(A,\Sigma)}^{2}+(1-t)||V||_{(A,\Sigma)}^{2}
       =t\Lambda_{A,\Sigma}(U)+(1-t)\Lambda_{A,\Sigma}(V)$$
      \end{proof}

 \begin{lemma}\label{ls2}
  $\forall~U,V\in H(\Omega),$ then $\Lambda_{A,\Sigma}'(U)(V-U)\leq \Lambda_{A,\Sigma}(V)-\Lambda_{A,\Sigma}(U)$ 
 \end{lemma}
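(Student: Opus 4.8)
The plan is to exploit the fact that $\Lambda_{A,\Sigma}$ is exactly a quadratic form, namely $\Lambda_{A,\Sigma}(U)=||U||_{(A,\Sigma)}^{2}=\<U,U\>_{(A,\Sigma)}$, so that the asserted gradient inequality collapses to a completion of squares and does not require the full apparatus of convex analysis. First I would recall from Lemma \ref{ls}, specifically the derivative formula (\ref{s5}), that $\Lambda_{A,\Sigma}'(U)V=2\<U,V\>_{(A,\Sigma)}$ for all $U,V\in H(\Omega)$. Substituting $V-U$ in the slot of $V$ and using bilinearity gives
$$\Lambda_{A,\Sigma}'(U)(V-U)=2\<U,V-U\>_{(A,\Sigma)}=2\<U,V\>_{(A,\Sigma)}-2||U||_{(A,\Sigma)}^{2}.$$

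Next I would read off the right-hand side directly from the definition of $\Lambda_{A,\Sigma}$,
$$\Lambda_{A,\Sigma}(V)-\Lambda_{A,\Sigma}(U)=||V||_{(A,\Sigma)}^{2}-||U||_{(A,\Sigma)}^{2},$$
and then form the difference between the two sides of the claimed inequality. Using the symmetry and bilinearity of $\<\cdot,\cdot\>_{(A,\Sigma)}$ that were verified when the inner product was introduced, this difference simplifies to a perfect square:
$$\Lambda_{A,\Sigma}(V)-\Lambda_{A,\Sigma}(U)-\Lambda_{A,\Sigma}'(U)(V-U)=||V||_{(A,\Sigma)}^{2}-2\<U,V\>_{(A,\Sigma)}+||U||_{(A,\Sigma)}^{2}=||V-U||_{(A,\Sigma)}^{2}.$$
Since $||\cdot||_{(A,\Sigma)}^{2}\geq 0$ (established in the verification that $\<\cdot,\cdot\>_{(A,\Sigma)}$ is an inner product), the difference is nonnegative, which is precisely the assertion $\Lambda_{A,\Sigma}'(U)(V-U)\leq\Lambda_{A,\Sigma}(V)-\Lambda_{A,\Sigma}(U)$; moreover the identity shows equality holds if and only if $U=V$.

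As an alternative I could derive the same inequality from the convexity established in Lemma \ref{ls1}: writing $\Lambda_{A,\Sigma}(U+t(V-U))\leq(1-t)\Lambda_{A,\Sigma}(U)+t\Lambda_{A,\Sigma}(V)$ for $t\in(0,1)$, rearranging to $t^{-1}[\Lambda_{A,\Sigma}(U+t(V-U))-\Lambda_{A,\Sigma}(U)]\leq\Lambda_{A,\Sigma}(V)-\Lambda_{A,\Sigma}(U)$, and letting $t\to 0^{+}$, at which point the left-hand side converges to the directional derivative $\Lambda_{A,\Sigma}'(U)(V-U)$ by Lemma \ref{ls}. I anticipate no real obstacle in either route; the only point demanding care is that the quadratic identity above rests on the symmetry of $\<\cdot,\cdot\>_{(A,\Sigma)}$, which is available, so the direct computation is the cleaner of the two and is the one I would write out in full.
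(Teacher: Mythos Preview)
Your proposal is correct. Your primary route (completing the square via $\Lambda_{A,\Sigma}(V)-\Lambda_{A,\Sigma}(U)-\Lambda_{A,\Sigma}'(U)(V-U)=\|V-U\|_{(A,\Sigma)}^{2}\ge 0$) is genuinely different from the paper's argument, which is precisely your alternative: the paper invokes convexity from Lemma~\ref{ls1}, writes $\Lambda_{A,\Sigma}(U+t(V-U))\le \Lambda_{A,\Sigma}(U)+t(\Lambda_{A,\Sigma}(V)-\Lambda_{A,\Sigma}(U))$, divides by $t$, and lets $t\to 0$. Your direct computation exploits the specific quadratic structure of $\Lambda_{A,\Sigma}$ and is shorter, avoids appealing to Lemma~\ref{ls1}, and yields the sharper statement that the gap is exactly $\|V-U\|_{(A,\Sigma)}^{2}$ with equality iff $U=V$; the paper's convexity argument, on the other hand, is the standard one that would survive unchanged for any convex G\^ateaux-differentiable functional, not just a squared norm. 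Either is fully adequate here.
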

 \begin{proof}
  Since $\Lambda_{A,\Sigma}$ is convex from Lemma\ref{ls1}
  $$\Lambda_{A,\Sigma}(tV+(1-t)U)=\Lambda_{A,\Sigma}(U+t(V-U))\leq \Lambda_{A,\Sigma}(U)
  +t(\Lambda_{A,\Sigma}(V)-\Lambda_{A,\Sigma}(U)) \forall~U,V\in H(\Omega),~~\forall~t\in (0,1) $$
 $$\frac{ \Lambda_{A,\Sigma}(U+t(V-U))-\Lambda_{A,\Sigma}(U)}{t}\leq 
  +\Lambda_{A,\Sigma}(V)-\Lambda_{A,\Sigma}(U) \forall~U,V\in H(\Omega),~~\forall~t\in (0,1) $$
  $$\lim_{t\to 0}\frac{ \Lambda_{A,\Sigma}(U+t(V-U))-\Lambda_{A,\Sigma}(U)}{t}\leq 
  +\Lambda_{A,\Sigma}(V)-\Lambda_{A,\Sigma}(U) \forall~U,V\in H(\Omega),~~\forall~t\in (0,1) $$
  so we have that 
$$\Lambda_{A,\Sigma}'(U)(V-U)\leq \Lambda_{A,\Sigma}(V)-\Lambda_{A,\Sigma}(U)~~\forall~U,V\in H(\Omega).$$
  
 \end{proof}
\begin{theorem}\label{ths1}
Let $\Lambda_{A,\Sigma}$ be $G-$differentiable and convex, then $\Lambda_{A,\Sigma}$ is weakly lower-semi-continuous 
\end{theorem}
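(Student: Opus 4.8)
The plan is to exploit the supporting-hyperplane (tangent-line) inequality for convex differentiable functionals that has just been recorded in Lemma \ref{ls2}: for all $U,V\in H(\Omega)$,
$$\Lambda_{A,\Sigma}'(U)(V-U)\leq \Lambda_{A,\Sigma}(V)-\Lambda_{A,\Sigma}(U).$$
Rearranging this into the form $\Lambda_{A,\Sigma}(V)\geq \Lambda_{A,\Sigma}(U)+\Lambda_{A,\Sigma}'(U)(V-U)$ expresses that $\Lambda_{A,\Sigma}$ lies above each of its tangent (affine) functionals, and this is the only structural ingredient the argument needs.

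First I would take an arbitrary sequence $U_l\rightharpoonup U$ converging weakly in $H(\Omega)$ and reduce the claim to proving $\Lambda_{A,\Sigma}(U)\leq \liminf_{l\to\infty}\Lambda_{A,\Sigma}(U_l)$. Since $H(\Omega)$ is a Hilbert space, hence reflexive, and $\Lambda_{A,\Sigma}$ is convex, sequential weak lower semicontinuity is equivalent to weak lower semicontinuity, so it suffices to argue along weakly convergent sequences. Applying the tangent-line inequality with the fixed base point $U$ and the variable point $V=U_l$ gives
$$\Lambda_{A,\Sigma}(U_l)\geq \Lambda_{A,\Sigma}(U)+\Lambda_{A,\Sigma}'(U)(U_l-U).$$

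The key step is then to observe, using (\ref{s5}), that $\Lambda_{A,\Sigma}'(U)$ acts as the bounded linear functional $V\mapsto 2\<U,V\>_{(A,\Sigma)}$, whose operator norm is $2\|U\|_{(A,\Sigma)}$. Because $U_l\rightharpoonup U$ weakly, every bounded linear functional applied to $U_l-U$ tends to $0$; in particular $\Lambda_{A,\Sigma}'(U)(U_l-U)=2\<U,U_l-U\>_{(A,\Sigma)}\to 0$ as $l\to\infty$. Passing to the liminf in the displayed inequality then yields
$$\liminf_{l\to\infty}\Lambda_{A,\Sigma}(U_l)\geq \Lambda_{A,\Sigma}(U)+\lim_{l\to\infty}\Lambda_{A,\Sigma}'(U)(U_l-U)=\Lambda_{A,\Sigma}(U),$$
which is precisely the asserted weak lower semicontinuity.

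I do not anticipate a genuine obstacle: the whole argument rests on the tangent-line estimate of Lemma \ref{ls2} together with the fact that $\Lambda_{A,\Sigma}'(U)$ is represented by the inner product $\<U,\cdot\>_{(A,\Sigma)}$ and is therefore weakly continuous. The only point deserving a line of care is the passage from weak lower semicontinuity to its sequential version, which is legitimate by reflexivity of $H(\Omega)$; alternatively, one can phrase the entire proof directly in the weak topology, since each functional $\<U,\cdot\>_{(A,\Sigma)}$ is weakly continuous by the very definition of that topology, so the sublevel sets $\{V:\Lambda_{A,\Sigma}(V)\leq c\}$, being intersections of the closed half-spaces determined by the supporting functionals, are weakly closed.
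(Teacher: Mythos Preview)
Your proof is correct and follows essentially the same route as the paper: apply the tangent-line inequality of Lemma~\ref{ls2} at the fixed point $U$ with $V=U_l$, use that the linear term $\Lambda_{A,\Sigma}'(U)(U_l-U)=2\<U,U_l-U\>_{(A,\Sigma)}$ vanishes under weak convergence, and pass to the $\liminf$. Your justification that $\Lambda_{A,\Sigma}'(U)$ is a bounded linear functional (hence weakly continuous) is in fact a cleaner explanation of why the linear term tends to zero than the paper's appeal to ``continuity of $\Lambda_{A,\Sigma}'$''.
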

\begin{proof}
 Let $U_{l}\rightharpoonup U$ in $H(\Omega)$ since $\Lambda_{A,\Sigma}'$ continuous then 
 $\displaystyle\lim_{l\to\infty}\Lambda_{A,\Sigma}(U)'(U_{l})= \Lambda_{A,\Sigma}'(U)(U)$
 by \ref{ls2} we have that 
 $$\Lambda_{A,\Sigma}'(U)(U_{l}-U)\leq \Lambda_{A,\Sigma}(U_{l})-\Lambda_{A,\Sigma}(U)$$
 so we have that 
 $$\liminf_{l\to\infty}\Lambda_{A,\Sigma}'(U)(U_{l}-U)\leq \liminf_{l\to\infty}(\Lambda_{A,\Sigma}(U_{l})-\Lambda_{A,\Sigma}(U))$$
since the limit of the left hand side exist and equal zero then we get that 
$$0\leq\liminf_{l\to\infty}(\Lambda_{A,\Sigma}(U_{l})-\Lambda_{A,\Sigma}(U))$$
so we have that 
$$\Lambda_{A,\Sigma}(U)\leq\liminf_{l\to\infty}\Lambda_{A,\Sigma}(U_{l})$$
Therefore $\Lambda_{A,\Sigma}$ is weakly lower-semi-continuous
 \end{proof}
 \begin{lemma}\label{ls4}
  Suppose that (A1,S1,C1-C3) holds, then 
  \item[(i)] There exists $\delta>0$ such that   
  $$||U||_{(M,P)}\leq \sqrt{\delta^{-1}}||U||_{(A,\Sigma)}$$
  \item[(ii)] The norms $||.||_{H}(\Omega)$ and $||.||_{(A,\Sigma)}$ are equivalents in $H(\Omega)$
 \end{lemma}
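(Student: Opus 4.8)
The plan is to reduce both assertions to two ingredients: a family of boundedness estimates coming from H\"older's inequality together with the Sobolev and trace embeddings, and a Poincar\'e-type compactness argument for the nontrivial direction of the equivalence in (ii). I would establish (ii) first, since (i) then follows by a single chaining of inequalities.

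\textbf{Step 1 (embedding estimates).} For $U=(u_1,\dots,u_k)\in H(\Omega)$ and each pair $i,j$ I would bound the interior contribution $\int_\Omega a_{ij}\,u_iu_j\,dx$ by H\"older with exponents $(p,p')$, writing $||u_iu_j||_{L^{p'}}\le ||u_i||_{L^{2p'}}\,||u_j||_{L^{2p'}}$. The exponent bookkeeping is exactly the hypothesis on $A$: for $N\ge 3$ the condition $p\ge N/2$ is equivalent to $2p'\le 2N/(N-2)=p(N)$, so $H^1(\Omega)\hookrightarrow L^{2p'}(\Omega)$ is continuous and gives
$$\int_\Omega a_{ij}\,u_iu_j\,dx\le C\,||a_{ij}||_{L^p(\Omega)}\,||u_i||_{H^1}\,||u_j||_{H^1},$$
and the same estimate holds for $m_{ij}$. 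For the boundary terms I would use H\"older on $\partial\Omega$ with exponents $(q,q')$ together with the trace map; here $q\ge N-1$ is equivalent to $2q'\le 2(N-1)/(N-2)$, which is precisely the range in which $\Gamma\colon H^1(\Omega)\to L^{2q'}(\partial\Omega)$ is continuous, yielding the analogous bounds for $\sigma_{ij}$ and $\rho_{ij}$. (For $N=2$ the relevant embeddings hold for every finite exponent, so the estimates are immediate from $p,q>1$.) Summing over $i,j$ produces three master estimates: each of $\int_\Omega\<A(x)U,U\>\,dx$, $\int_{\partial\Omega}\<\Sigma(x)U,U\>\,dx$, and $||U||_{(M,P)}^2$ is bounded by $C\,||U||_{H(\Omega)}^2$. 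Adding the gradient term to the first two then gives $||U||_{(A,\Sigma)}^2\le(1+C)\,||U||_{H(\Omega)}^2$, which is one of the two inequalities needed in (ii).

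\textbf{Step 2 (hard half of (ii); the main obstacle).} The reverse inequality $||U||_{H(\Omega)}\le C'\,||U||_{(A,\Sigma)}$ is the crux, and I would argue by contradiction and compactness. If it failed there would be $U_n$ with $||U_n||_{H(\Omega)}=1$ and $||U_n||_{(A,\Sigma)}\to 0$. Boundedness in $H(\Omega)$ yields, along a subsequence, $U_n\rightharpoonup U$ in $H(\Omega)$, $U_n\to U$ in $L^2_k(\Omega)$ by Rellich, and $\Gamma U_n\to\Gamma U$ in $L^2_k(\partial\Omega)$ by compactness of the trace. Since $||U_n||_{(A,\Sigma)}\to 0$, the gradient part forces $\nabla U_n\to 0$ in $L^2$; together with the $L^2$-convergence of $U_n$ this makes $(U_n)$ Cauchy in $H(\Omega)$, so $U_n\to U$ strongly, $\nabla U=0$ (each component constant), and $||U||_{H(\Omega)}=1$ forces $U\neq 0$. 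Strong convergence and the Step~1 estimates let me pass to the limit in the quadratic forms to obtain $\int_\Omega\<A(x)U,U\>\,dx=0$ and $\int_{\partial\Omega}\<\Sigma(x)U,U\>\,dx=0$. I then invoke Assumption~\ref{ASMP}: if $A$ is positive definite on a set $E\subset\Omega$ of positive measure, then for the nonzero constant vector $U$ we have $\<A(x)U,U\>>0$ on $E$, contradicting the vanishing of the interior integral; in the alternative case one argues identically with $\Sigma$ on a positive-measure subset of $\partial\Omega$. This excludes the offending sequence and proves the hard half. The delicate point is exactly here—guaranteeing that the limit is a \emph{nonzero constant} and then ruling it out through the positive-definiteness-on-a-set-of-positive-measure hypothesis; the weak-to-strong upgrade via the compact embeddings is what makes this work.

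\textbf{Step 3 (assertion (i)).} Finally I would chain the third master estimate of Step~1 with the hard half just proved: $||U||_{(M,P)}^2\le C\,||U||_{H(\Omega)}^2\le C(C')^2\,||U||_{(A,\Sigma)}^2$. Setting $\delta^{-1}:=C(C')^2$ and taking square roots gives $||U||_{(M,P)}\le\sqrt{\delta^{-1}}\,||U||_{(A,\Sigma)}$, which is (i). Thus both parts follow, with the entire weight of the argument resting on the compactness step of Step~2.
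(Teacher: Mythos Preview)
Your proof is correct, but it proceeds in the opposite order from the paper's argument and uses a different mechanism for the key step.

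The paper proves (i) first, by a direct minimization: it sets $\mathbb{S}=\{U\in H(\Omega):\ \|U\|_{(M,P)}=1\}$, defines $\delta=\inf_{U\in\mathbb{S}}\|U\|_{(A,\Sigma)}^2$, and shows this infimum is attained by taking a minimizing sequence, passing to a weak limit, and invoking the weak lower semicontinuity of $\Lambda_{A,\Sigma}$ already established as Theorem~\ref{ths1}. Positivity $\delta>0$ then follows because $\|\cdot\|_{(A,\Sigma)}$ is a genuine norm. Part (ii) is deduced from (i) via the inequality $\|U\|_{H(\Omega)}^2\le \|U\|_{(A,\Sigma)}^2+\|U\|_{(M,P)}^2$.

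You reverse the dependency: you obtain the hard half of (ii) first by a Poincar\'e-type contradiction argument (normalize in $H(\Omega)$, use Rellich and trace compactness to upgrade to strong convergence, land on a nonzero constant vector, and rule it out via Assumption~\ref{ASMP}), and then derive (i) by chaining the H\"older/embedding bound $\|U\|_{(M,P)}^2\le C\|U\|_{H(\Omega)}^2$ with (ii). What each approach buys: the paper's minimization is constructive---the constant $\delta$ is exactly the first eigenvalue $\mu_1$ of the Steklov--Robin system---and it reuses the l.s.c.\ machinery set up for the main theorem. Your compactness route is more self-contained, makes the role of the positive-definiteness hypothesis completely explicit (it is precisely what kills the constant limit), and sidesteps the question of why the minimizing sequence on $\mathbb{S}$ is bounded in $H(\Omega)$, a point the paper passes over quickly.
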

 \begin{proof}
  Define $$\mathbb{S}:=\{U\in H(\Omega): ||U||_{(M,P)}=1\}$$ and 
  $$\delta:=\inf_{U\in\mathbb{S}}||U||_{(A,\Sigma)}^{2}=\inf_{U\in\mathbb{S}}\Lambda_{A,\Sigma}(U)^{2}$$
  \begin{claim}\label{C1}
   There exists $\^{U}\in\mathbb{S}$ such that 
   $$\Lambda_{A,\Sigma}(\^{U})=||\^{U}||_{(A,\Sigma)}=\sqrt{\delta}$$
   By the definition of $\delta$, there exists sequence $\{U_{n}\}_{n=1}^{\infty} \in \mathbb{S}$ such that 
   $$\Lambda_{A,\Sigma}(U_n)\to\sqrt{\delta}~~{\rm and}~~\Lambda_{A,\Sigma}(U_n)<\sqrt{\delta}+1$$
   Since $\{U_{n}\}_{n=1}^{\infty} \in \mathbb{S}$ the sequence $\{U_{n}\}_{n=1}^{\infty}$ is bounded in $(H(\Omega)),||.||_{H}$ 
   which implies that exist a subsequence $\{U_{n_{k}}\}_{k=1}^{\infty}$ of $\{U_{n}\}_{n=1}^{\infty}$ and $\^{U}$ such that 
   $U_{n_k}\rightharpoonup\^{U}$ weakly in $(H(\Omega)),||.||_{H}$. Consequently, from the compact embedding of $H^{1}(\Omega)$  into 
   $L^{2}(\Omega)$, we that $U_{n_k}\to\^{U}$ in $(L^{2}_{N}(\Omega)),||.||_{L^{2}_{N}}$ where 
   $||U||_{L^{2}_{N}}:=\sum_{i=1}^{N}||u_i||_{L^{2}(\Omega)}$. 
      Thus $\^{U}\in\mathbb{S}$. Furthermore, it follows from Lemma(\ref{l1}) that
      $$||\^{U}||_{(A,\Sigma)}\leq\liminf_{k\to\infty}||U_{n_k}||_{(A,\Sigma)}=\sqrt{\delta}$$
      Therefore 
       $$||\^{U}||_{(A,\Sigma)}\leq\liminf_{k\to\infty}||U_{n_k}||_{(A,\Sigma)}
       \leq\limsup_{k\to\infty}||U_{n_k}||_{(A,\Sigma)}$$
       since we know that 
  $$\sqrt{\delta}=\liminf_{k\to\infty}||U_{n_k}||_{(A,\Sigma)}=\limsup_{k\to\infty}||U_{n_k}||_{(A,\Sigma)}=
  \lim_{k\to\infty}||U_{n_k}||_{(A,\Sigma)}=||\^{U}||_{(A,\Sigma)}$$
    Thus 
  $$\Lambda_{A,\Sigma}(\^{U})=\sqrt{\delta}$$
    \end{claim}
\begin{claim}\label{C2}
$$\delta >0$$
 Clarey, $\delta\geq$. If $\delta=0$, then $||\^{U}||_{(A,\Sigma)}=0$ and thus $\^{U}=0$, which contradicts the assumption that 
 $\^{U}\in\mathbb{S}$. Therefore $\delta>0$
\end{claim}
Since every $W\in H(\Omega)$ can be written as $W=\frac{U}{||U||_{(M,P)}}$ for all $U\in H(\Omega)$
so that $||W||_{(A,\Sigma)}=\frac{||U||_{(A,\Sigma)}}{||U||_{(M,P)}}$
Since $W\in H(\Omega)$, so that $||W||_{(A,\Sigma)}=\Lambda_{A,\Sigma}(W)\geq\sqrt{\delta}$. 
so we have that  
$$\sqrt{\delta}\leq\frac{||U||_{(A,\Sigma}}{||U||_{(M,P)}}$$
It follows from claim \ref{C1} and claim\ref{C2}, then 
$$||U||_{(M,P)}\leq \sqrt{\delta^{-1}}||U||_{(A,\Sigma)} ~~\forall ~U\in H(\Omega)$$
The proof of  the norms $||.||_{H}(\Omega)$ and $||.||_{(A,\Sigma)}$ are equivalents in $H(\Omega)$,
by continuity of $\Lambda_{A,\Sigma}$ there exists a constant $\xi >0$ such that 
$$||U||_{(A,\Sigma)}\leq\sqrt{\xi}||U||_{H(\Omega)}~\forall ~~U\in H(\Omega).$$
We have that 
$$||U||_{H(\Omega)}^{2}\leq ||U||_{(A,\Sigma)}^{2} +||U||_{(M,P)}^{2},$$ we get that 
$$||U||_{H(\Omega)}\leq\left( ||U||_{(A,\Sigma)}^{2} +||U||_{(M,P)}^{2}\right)^{\frac{1}{2}}$$ 
Therefore by the first part we have that 
$$||U||_{H(\Omega)}\leq\left( (1+{\delta^{-1}})||U||_{(A,\Sigma)}^{2}\right)^{\frac{1}{2}},$$  we get that 

$$||U||_{H(\Omega)}\leq\left( 1+{\delta^{-1}}\right)^{\frac{1}{2}}||U||_{(A,\Sigma)}$$
Therefore 
$$||U||_{H(\Omega)}\leq\sqrt{ 1+{\delta^{-1}}}||U||_{(A,\Sigma)}~~\forall~~ U\in H(\Omega)$$
So we get that 
$$\sqrt{\xi^{-1}}||U||_{(A,\Sigma)}\leq||U||_{H(\Omega)}\leq\sqrt{ 1+{\delta^{-1}}}||U||_{(A,\Sigma)}~~\forall~~ U\in H(\Omega)$$
The equivalences desired 
 \end{proof}
\section{Main theorem}

 \begin{theorem}\label{ths}
Assume that $A2,S2,C1-C3$ as above. Then we have the following. 
\begin{description}
 \item[i] The eigensystem \eqref{Es3} has a sequence of real eigenvalues 
$$0<\mu_{1}\leq\mu_2\leq\mu_3\leq\cdots\leq\mu_j\leq\cdots\to\infty~as~j\to\infty$$ 
each eigenvalue has a finite-dimensional eigenspace. 
\item[ii] The eigenfunctions $\varphi_j$ corresponding to the eigenvalues $\mu_j$ from an $(A,\Sigma)-$orthogonal and $(M,P)-$orthonormal family in 
$[\mathbb{H}_{M,P}(\Omega)]^{\bot}$ ( a closed subspace of $H(\Omega)$
\item[iii] The normalized eigenfunctions provide a complete $(A,\Sigma)-$orthonormal basis of $[\mathbb{H}_{M,P}(\Omega)]^{\bot}$. 
Moreover, each function in $U\in [\mathbb{H}_{M,P}(\Omega)]^{\bot}$
has a unique representation of the from 
\begin{equation}\label{4}
\begin{gathered}
U=\displaystyle\sum^{\infty}_{j=1}c_j\varphi_j~with~c_j:=\frac{1}{\mu_j}\<U,\varphi_j\>_{(A,\Sigma)}=<U,\varphi_j>_{(M.P)} \\
||U||^{2}_{(A,\Sigma)}=\displaystyle\sum^{\infty}_{j=1}\mu_j|c_j|^2
\end{gathered}
\end{equation}
In addition, $$||U||^{2}_{(M,P)}=\displaystyle\sum^{\infty}_{j=1}|c_j|^2$$
\end{description}
\end{theorem}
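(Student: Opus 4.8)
The plan is to recast the weak eigensystem \eqref{Es4} as an eigenvalue problem for a single compact self-adjoint operator and then invoke the spectral theorem. First I would observe that, by Lemma \ref{ls4}(i), the bilinear form $\langle\cdot,\cdot\rangle_{(M,P)}$ satisfies $|\langle U,V\rangle_{(M,P)}|\le \delta^{-1}\|U\|_{(A,\Sigma)}\|V\|_{(A,\Sigma)}$, so it is a bounded, symmetric, positive-semidefinite bilinear form on the Hilbert space $\big(H(\Omega),\langle\cdot,\cdot\rangle_{(A,\Sigma)}\big)$ (which is genuinely a Hilbert space because $\|\cdot\|_{(A,\Sigma)}$ is equivalent to the $H(\Omega)$-norm, Lemma \ref{ls4}(ii)). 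By the Riesz representation theorem there is a unique bounded self-adjoint operator $T:H(\Omega)\to H(\Omega)$ with $\langle TU,V\rangle_{(A,\Sigma)}=\langle U,V\rangle_{(M,P)}$ for all $U,V$; it is positive since $\langle TU,U\rangle_{(A,\Sigma)}=\|U\|_{(M,P)}^{2}\ge 0$. With this, \eqref{Es4} reads $\langle\varphi,V\rangle_{(A,\Sigma)}=\mu\langle T\varphi,V\rangle_{(A,\Sigma)}$ for all $V$, i.e. $T\varphi=\mu^{-1}\varphi$, so that the eigenpairs of \eqref{Es4} with $\mu>0$ correspond exactly to the eigenpairs of $T$ with eigenvalue $\lambda=\mu^{-1}>0$.

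The crux of the argument, and the step I expect to be the main obstacle, is to prove that $T$ is compact. Here I would take $U_n\rightharpoonup U$ weakly in $H(\Omega)$ and show $TU_n\to TU$ in $\|\cdot\|_{(A,\Sigma)}$. Since $\|TU_n-TU\|_{(A,\Sigma)}=\sup_{\|V\|_{(A,\Sigma)}=1}\langle U_n-U,V\rangle_{(M,P)}$, it suffices to control $\int_\Omega\langle M(x)(U_n-U),V\rangle\,dx$ and $\int_{\partial\Omega}\langle P(x)(U_n-U),V\rangle\,dx$ uniformly over $\|V\|_{(A,\Sigma)}\le 1$. Using H\"older together with the integrability hypotheses on the entries of $M$ (condition (M1), $m_{ij}\in L^p$) and of $P$ (condition (P1), $\rho_{ij}\in L^q$), each term is bounded by a constant times $\|U_n-U\|_{L^{r}(\Omega)}$ respectively $\|\Gamma(U_n-U)\|_{L^{s}(\partial\Omega)}$ for exponents $r,s$ in the subcritical range. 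The compact Sobolev embedding $H^1(\Omega)\hookrightarrow L^r(\Omega)$ and the compactness of the trace operator $\Gamma:H(\Omega)\to L^s(\partial\Omega)$ then give strong convergence in those spaces, so the supremum tends to $0$ and $T$ is compact. It is precisely the exponent conditions in (M1) and (P1) that make this H\"older/compactness bookkeeping close.

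With $T$ compact, self-adjoint and positive, I would invoke the spectral theorem for compact self-adjoint operators. It yields an at-most-countable family of eigenvalues $\lambda_1\ge\lambda_2\ge\cdots>0$ of finite multiplicity accumulating only at $0$, whose eigenvectors form an $(A,\Sigma)$-orthonormal basis of $\overline{\operatorname{Ran}T}=(\ker T)^{\perp}$. Now $\ker T=\{U:\langle TU,U\rangle_{(A,\Sigma)}=0\}=\{U:\|U\|_{(M,P)}=0\}=\H_{(M,P)}(\Omega)$, so $(\ker T)^{\perp}=[\H_{(M,P)}(\Omega)]^{\perp}$, the space in the statement. Setting $\mu_j:=\lambda_j^{-1}$ reverses the order to give $0<\mu_1\le\mu_2\le\cdots\to\infty$ with finite-dimensional eigenspaces, proving (i); positivity $\mu_1>0$ follows from $\lambda_1=\|T\|\le\delta^{-1}$, equivalently $\mu_1\ge\delta>0$ via Claim \ref{C2}. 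This settles (i) and the basis assertion of (iii) simultaneously.

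Finally I would normalize. Rescaling each eigenvector so that $\|\varphi_j\|_{(M,P)}=1$ and using $\langle\varphi_i,\varphi_j\rangle_{(A,\Sigma)}=\mu_i\langle\varphi_i,\varphi_j\rangle_{(M,P)}$ shows that eigenfunctions for distinct eigenvalues are simultaneously $(A,\Sigma)$- and $(M,P)$-orthogonal, and a Gram--Schmidt step within each finite-dimensional eigenspace makes the whole family $(A,\Sigma)$-orthogonal and $(M,P)$-orthonormal, giving (ii). Since $\{\mu_j^{1/2}\varphi_j\}$ is then an $(A,\Sigma)$-orthonormal basis of $[\H_{(M,P)}(\Omega)]^{\perp}$, every $U$ there expands as $U=\sum_j c_j\varphi_j$; using $T\varphi_j=\mu_j^{-1}\varphi_j$ identifies $c_j=\mu_j^{-1}\langle U,\varphi_j\rangle_{(A,\Sigma)}=\langle U,\varphi_j\rangle_{(M,P)}$, and Parseval's identity yields $\|U\|_{(A,\Sigma)}^2=\sum_j\mu_j|c_j|^2$ and $\|U\|_{(M,P)}^2=\sum_j|c_j|^2$, completing (iii). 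The same conclusions follow by the purely variational route of iteratively minimizing $\Lambda_{A,\Sigma}$ over $(M,P)$-normalized functions $(M,P)$-orthogonal to the eigenfunctions already found, using the weak lower semicontinuity of Theorem \ref{ths1} and the identical compactness input; in that approach the divergence $\mu_j\to\infty$ is seen from the fact that an infinite $(M,P)$-orthonormal family bounded in $\|\cdot\|_{(A,\Sigma)}$ would contradict the compactness of $T$.
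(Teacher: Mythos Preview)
Your argument is correct, but it follows a different route from the paper. The paper proceeds by direct variational construction: it minimizes $\Lambda_{A,\Sigma}$ on the constraint set $W_0=\{U\in[\H_{(M,P)}(\Omega)]^{\perp}:\|U\|_{(M,P)}=1\}$, uses the weak lower semicontinuity established in Theorem~\ref{ths1} together with the compact embedding and trace compactness to show the infimum is attained at some $\varphi_1$, and then iterates, minimizing at each stage over the $(M,P)$-orthogonal complement of the eigenfunctions already found. The divergence $\mu_j\to\infty$, the finite multiplicity, and the completeness of the normalized family are each proved by separate contradiction arguments (Claims~1--3). You instead package the entire problem into a single compact self-adjoint positive operator $T$ on $(H(\Omega),\langle\cdot,\cdot\rangle_{(A,\Sigma)})$ and invoke the spectral theorem once; all the structural conclusions (countable spectrum, finite multiplicities, accumulation only at $0$, orthonormal eigenbasis of $(\ker T)^{\perp}$) come for free, and the identification $\ker T=\H_{(M,P)}(\Omega)$ via Cauchy--Schwarz for the semidefinite form $\langle\cdot,\cdot\rangle_{(M,P)}$ pins down the right subspace. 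Both proofs rest on the same analytic input---the integrability hypotheses on $M,P$ and the compact Sobolev and trace embeddings---and you correctly locate that input in the compactness of $T$. Your approach is shorter and more conceptual; the paper's hands-on minimization has the advantage of exhibiting the Courant--Fischer characterization $\mu_{j+1}=\inf_{W_j}\Lambda_{A,\Sigma}/\|\cdot\|_{(M,P)}^{2}$ explicitly along the way, which is used later in Corollary~1. Your closing remark that the variational route yields the same conclusions is exactly what the paper does.
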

\begin{proof}
 We will prove the existence of a sequence of real eigenvalues $\mu_{j}$ and the eigenfunctions $\varphi_{j}$ corresponding to 
 the eigenvalues that from an orthogonal family in $[\mathbb{H}_{M,P}(\Omega)]^{\bot}$ \\
 We show that $\Lambda_{A,\Sigma}$ attains its minimum on the constraint set 
 $$W_{0}=\{U\in [\mathbb{H}_{M,P}(\Omega)]^{\bot}:\Upsilon_{M,P}(U)=0\}$$
 Let $\alpha:=\displaystyle\inf_{U\in W_{0}},$ by using the continuity of the trace operator, 
 the Sobolev embedding theorem and the lower-semi-continuity of 
 $\Lambda_{A,\Sigma}.$\\
 Let $\{U_{l}\}_{l=1}^{\infty}$ be a minimizing sequence in $W_{0}$ for $\Lambda_{A,\Sigma},$  
 since $\displaystyle\lim_{l\to\infty}\Lambda_{A,\Sigma}(U_{l})=\alpha,$ we have that $\Lambda_{A,\Sigma}(U_{l})=||U_l||_{(A,\Sigma)},$
 by the definition of $\alpha$ we have that 
for all sufficiently large $l$, and for all $\epsilon >0$, then $||U_l||^{2}_{A,\Sigma}\leq \alpha+\epsilon$ by using the equivalent norm 
(See lemma\ref{ls4}) we have that 
there is exist $\beta$ such that 
$$||U_l||^{2}_{H(\Omega)}\leq\beta ||U_l||^{2}_{A,\Sigma}$$ so we have that 

$$||U_l||^{2}_{H(\Omega)}\leq\beta ||U_l||^{2}_{A,\Sigma}\leq\beta(\alpha+\epsilon),$$
so this sequence is bounded in $H^(\Omega)$. Thus it has a weakly convergent subsequence $\{U_{l_j}:j\geq 1\}$ which convergent 
weakly to limit $\hat{U}$ in $H(\Omega)$. From Rellich-Kondrachov theorem this subsequence convergent strongly to $\hat{U}$ in $L^{2}_{k}(\Omega),$
so $\hat{U}$ in $W_0$. Thus $\Lambda_{A,\Sigma}(\hat{U})=\alpha$ as the functional is weakly l.s.c. (see Theorem\ref{ths1}) \\

 Then there exists $\varphi_1$ such that $\Lambda_{A,\Sigma}(\varphi_1)=\alpha$.  Hence, $\Lambda_{A,\Sigma}$ attains
 its minimum at $\varphi_1$ and $\varphi_1$ satisfies the following 
\begin{equation}\label{ES04}
\begin{gathered}
 \int_{\Omega}\triangledown.\varphi_1\triangledown V\,dx+\int_{\Omega}\<A(x)\varphi_{1},V\>\,dx+\int_{\partial\Omega}\<\Sigma(x)\varphi_{1},V\>
 =\mu_{1}\left(\int_{\Omega}\<M(x)\varphi_{1},V\>\,dx+\int_{\partial\Omega}\<P(x)\varphi_{1},V\>\,dx\right)
\end{gathered}
\end{equation}
For all $V\in[\mathbb{H}_{(M,P)}(\Omega)]^{\bot} $ We see that $(\mu_{1},\varphi_{1})$ satisfies equation (\ref{Es4})
in weak sense and $\varphi_{1} \in W_{0}$ this implies that 
$\varphi_{1}\in [\mathbb{H}_{(M,P)}(\Omega)]^{\bot}$ by the definition of $W_{0}$, Now take $V=\varphi_{1}$ in equation (\ref{ES04}),
we obtain that the eigenvalue $\mu_{1}$ is the infimum $\alpha=\Lambda_{A,\Sigma}(\varphi_{1})=\mu_{1}$. This means that we could define $\mu_{1}$
by Rayleigh quotient $$\mu_{1}=\inf_{\substack{U\in H(\Omega)\\u\neq 0}}\frac{\Lambda_{A,\Sigma}(U)}{||U||^{2}_{(M,P)}}$$
Clearly, $\mu_{1}=\Lambda_{A,\Sigma}(\varphi_{1})\geq 0$. Indeed  assume that $\Lambda_{A,\Sigma}(\varphi_{1})=0$ then $|\triangledown \varphi_{1}|=0$ on $\Omega$
, hence $\varphi_{1}$ must be a constant that contradicts the assumptions imposed on $A(x)$. Thus $\mu_{1} >0$. \\

Now we show the existence of higher eigenvalues. \\
Define 
$$\mathbb{F}_{1}:W_{0}\to\mathbb{R}~{\rm by}~\mathbb{F}_{1}(U)=\<U,\varphi_{1}\>_{M,P}$$ 
we know that the kernel of $\mathbb{F}_{1}$ 
$$ker\mathbb{F}_{1}=\{U\in W_{0}:\mathbb{F}_{1}(U)=0\}=:W_{1}.$$
Since $W_{1}$ is the null-space of the continuous functional $\<.,\varphi_{1}\>_{M,P}$ on $[\mathbb{H}_{(M.P)}(\Omega)]^{\bot},$ 
$W_{1}$ is a closed subspace of $[\mathbb{H}_{(M,P)}(\Omega)]^{\bot}$, and it is therefore a Hilbert space itself under the same inner product 
$<.,.>_{(M,P)}$. Now we define 
$$\mu_{2}=\inf\{\Lambda_{A,\Sigma}(U):U\in W_{1}\}=\inf_{\substack{U\in W_{1}\\U\neq 0}}\frac{\Lambda_{A,\Sigma}(U)}{||U||^{2}_{(M,P)}}$$
Since $W_{1}\subset W_0$ then we have that $\mu_{1}\leq\mu_{2}$. Now we define 
$$\mathbb{F}_{2}:W_{1}\to\mathbb{R}~{\rm by}~\mathbb{F}_{2}(U)=\<U,\varphi_{2}\>_{M,P}$$
we know that the kernel of $\mathbb{F}_{2}$ 
$$ker\mathbb{F}_{2}=\{U\in W_{1}:\mathbb{F}_{2}(U)=0\}=:W_{2}.$$
Since $W_{2}$ is the null-space of the continuous functional $\<.,\varphi_{2}\>_{M,P}$ on $[\mathbb{H}_{(M.P)}(\Omega)]^{\bot},$ 
$W_{2}$ is a closed subspace of $[\mathbb{H}_{(M,P)}(\Omega)]^{\bot}$, and it is therefore a Hilbert space itself under the same inner product 
$<.,.>_{(M,P)}$. Now we define 
$$\mu_{3}=\inf\{\Lambda_{A,\Sigma}(U):U\in W_{2}\}=\inf_{\substack{U\in W_{2}\\U\neq 0}}\frac{\Lambda_{A,\Sigma}(U)}{||U||^{2}_{(M,P)}}$$
Since $W_{2}\subset W_{1}$ then we have that $\mu_{2}\leq\mu_{3}$.

Moreover, we can repeat the above arguments to show that 
$\mu_{3}$ is achieved at some $\varphi_{3}\in [\mathbb{H}_{(M,P)}(\Omega)]^{\bot}.$ \\
We let 
$$W_{3}=\{u\in W_{2}:<U,\varphi_{3}>_{(M,P)}=0\},$$
and 
$$\mu_{4}=\inf\{\Lambda_{A,\Sigma}(U):U\in W_{3}\}=\inf_{\substack{U\in W_{3}\\u\neq 0}}\frac{\Lambda_{A,\Sigma}(U)}{||U||^{2}_{(M,P)}}$$
Since $W_{3}\subset W_{2}$ then we have that $\mu_{3}\leq\mu_{4}$. Moreover, we can repeat the above arguments to show that 
$\mu_{4}$ is achieved at some $\varphi_{4}\in [\mathbb{H}_{(M,P)}(\Omega)]^{\bot}.$ \\
Proceeding inductively, ( In general we can define 
$$\mathbb{F}_{j}:W_{j-1}\to\mathbb{R}~{\rm by}~\mathbb{F}_{j}(U)=\<U,\varphi_{j}\>_{M,P}$$
we know that the kernel of $\mathbb{F}_{2}$ 
$$ker\mathbb{F}_{j}=\{U\in W_{j-1}:\mathbb{F}_{j}(U)=0\}=:W_{j}.$$
Since $W_{j}$ is the null-space of the continuous functional $\<.,\varphi_{j}\>_{M,P}$ on $[\mathbb{H}_{(M.P)}(\Omega)]^{\bot},$ 
$W_{j}$ is a closed subspace of $[\mathbb{H}_{(M,P)}(\Omega)]^{\bot}$, and it is therefore a Hilbert space itself under the same inner product 
$<.,.>_{(M,P)}$. 
Now we define 
$$\mu_{j+1}=\inf\{\Lambda_{A,\Sigma}(U):U\in W_{j}\}=\inf_{\substack{U\in W_{j}\\U\neq 0}}\frac{\Lambda_{A,\Sigma}(U)}{||U||^{2}_{(M,P)}}$$
In this way, we generate a sequence of eigenvalues 
$$0<\mu_{1}\leq\mu_{2}\leq\mu_{3}\leq\ldots\leq\mu_{j}\leq\ldots$$
whose associated $\varphi_{j}$ are $c-$orthogonal and $(M,P)-$orthonormal in $[H^{1}_{0}(\Omega)]^{\bot}$ \\ 
\textbf{Claim 1} $\mu_{j}\to\infty$ as $j\to\infty$
\begin{proof}
 By way of contradiction, assume that the sequence is bounded above by constant. Therefore, the corresponding sequence of eigenfunctions $\varphi_{j}$
is bounded in $H(\Omega)$ (i.e.; by the definition of the limit at $\infty$ $\forall~ S>0,~\exists N>0$ such that $|\varphi_{j}|>S,$ whenever $j>N$, 
the   ignition of the statement $\exists S>0$ such that $|\varphi_{j}|\leq S~\forall j$). 
By Rellich-Kondrachov theorem and the compactness of the trace operator, there is a Cauchy subsequence (which we again denote by $\varphi_{j}$
such that 
\begin{equation}\label{Es9}
\begin{gathered}
||\varphi_{j}-\varphi_{k}||^{2}_{(M,P)}\to 0.
\end{gathered}
\end{equation}

Since the $\varphi_{j}$ are $(M,P)-$orthonormal, we have that 
$||\varphi_{j}-\varphi_{k}||^{2}_{(M,P)}=||\varphi_{j}||^{2}_{(M,P)}+||\varphi_{k}||^{2}_{(M.P)}=2>0$, 
if$j\neq k,$ which contradicts 
equation (\ref{Es9}). Thus, $\mu_{j}\to\infty.$ we have that each $\mu_{j}$ occurs only finitely many times. 
\end{proof}
\textbf{Claim 2} Each eigenvalue $\mu_{j}$ has a finite-dimensional eigenspace. \\ 
\begin{proof}
By way of contradiction, assume that each eigenvalue $\mu_{j}$ has infinite-dimensional eigenspace. 
let $\mu$ has corresponding sequence of eigenfunctions $\{\varphi_{1},\varphi_{2},...,\varphi_{j},...\}$ 
we know that $\mu=||\varphi_{1}||^{2}_{(A,\Sigma)}=...=||\varphi_{j}||^{2}_{(A,\Sigma)}=...$, this contradicts  claim 1
therefore, each eigenvalue has a finite-dimensional eigenspace

\end{proof}
We will show that the normalized eigenfunctions provide a complete orthonormal basis of $[H^{1}_{0}(\Omega)]^{\bot}$. 
Let $$\psi_{j}=\frac{1}{\sqrt{\mu_{j}}}\varphi_{j},$$
so that $||\psi_{j}||^{2}_{(A,\Sigma)}=1$ \\
\textbf{Claim 3} 
The sequence $\{\psi_{j}\}_{j\geq1}$ is a maximal $(A,\Sigma)-$orthonormal family of $[\mathbb{H}_{(M.P)}(\Omega)]^{\bot}$.
(we know that the set maximal $(A,\Sigma)-$orthonormal if and only if it is  complete orthonormal basis)
\begin{proof}
 By way of contradiction, assume that the sequence $\{\psi_{j}\}_{j\geq1}$ is not maximal, then there exists a $\xi\in 
 [\mathbb{H}_{(M.P)}(\Omega)]^{\bot},$ and 
$\xi\not\in \{\psi_{j}\}_{j\geq1},$ 
such that $||\xi||^{2}_{(A,\Sigma)}=1$ and $<\xi,\psi_{j}>_{c}=0~\forall~ j$, i.e.;
 $$0=\<\xi,\psi_{j}\>_{(A,\Sigma)}=\<\xi,\frac{1}{\sqrt{\mu_{j}}}\varphi_{j}\>_{(A,\Sigma)}=$$
 $$\frac{1}{\sqrt{\mu_{j}}}\<\xi,\varphi_{j}\>_{(A,\Sigma)}=^{(~by~\ref{ES04})}\frac{\mu_{j}}{\sqrt{\mu_{j}}}\<\xi,\varphi_{j}\>_{(M,P)}=
\mu_{j}\<\xi,\frac{1}{\sqrt{\mu_{j}}}\varphi_{j}\>_{(M,P)}={\mu_{j}}<\xi,\psi_{j}>_{\partial},$$  since $\mu_{j}>0~\forall~ j$. Therefore 
$\<\xi,\psi_{j}\>_{(M.P)}=0$. We have that $\xi\in W_{j}~\forall~ j\geq 1$. It follows from the definition of $\mu_{j}$ that 
$$\mu_{j}\leq\frac{||\xi||^{2}_{(A,\Sigma)}}{||\xi||^{2}_{(M,P)}}=\frac{1}{||\xi||^{2}_{(M,P)}}~\forall ~j\geq 1.$$ 
Since we know from claim 1 that $\mu_{j}\to\infty$ as $j\to\infty$  we have that $||\xi||^{2}_{(M,P)}=0.$ Therefore $\xi=0$ a.e in $\Omega$,
which contradicts the definition of $\xi.$ Thus the sequence $\{\psi_{j}\}_{j\geq1}$ 
is a maximal $(A,\Sigma)-$orthonormal family of $[H_{(M,P)}(\Omega)]^{\bot},$
so the sequence $\{\psi_{j}\}_{j\geq1}$  provides a complete orthonormal basis of $[H_{(M,P)}(\Omega)]^{\bot};$
that is, for any $U\in[\mathbb{H}_{(A,\Sigma)}(\Omega)]^{\bot}$,\\ \\
$U=\displaystyle\sum^{\infty}_{j=1}d_{j}\psi_{j}$  with $d_{j}=\<U,\psi_{j}\>_{(A,\Sigma)}=\frac{1}{\sqrt{\mu_{j}}}\<U,\varphi_{j}\>_{(A,\Sigma)},$
and $||U||^{2}_{(A,\Sigma)}=\displaystyle\sum^{\infty}_{j=1}|d_{j}|^2$\\ \\

$$U=\displaystyle\sum^{\infty}_{j=1}d_{j}\frac{1}{\sqrt{\mu_{j}}}\varphi_{j},$$ now let 
$$c_{j}=d_{j}\frac{1}{\sqrt{\mu_{j}}}=\frac{1}{\mu_{j}}\<U,\varphi_{j}\>_{(A,\Sigma)}=^{(\ref{ES04})}\<U,\varphi_{j}\>_{(M,P)}.$$
Therefore, 
$$U=\displaystyle\sum^{\infty}_{j=1}c_{j}\varphi_{j},$$ and 
$$||U||^{2}_{(A,\Sigma)}=\displaystyle\sum^{\infty}_{j=1}|c_{j}|^{2}||\varphi_{j}||^{2}_{(A,\Sigma)}=\displaystyle\sum^{\infty}_{j=1}\mu_{j}|c_{j}|^{2}$$
\end{proof}
\textbf{Claim 4} 
We shall show that 
$$||U||^{2}_{(M.P)}=\displaystyle\sum^{\infty}_{j=1}|c_{j}|^{2}$$
\begin{proof}
$$||U||^{2}_{(M,P)}=<u,u>_{(M,P)}=
\<\displaystyle\sum^{\infty}_{j=1}c_{j}\varphi_{j},\displaystyle\sum^{\infty}_{k=1}c_{k}\varphi_{k}\>_{(M,P)}
=\displaystyle\sum^{\infty}_{j=1}c_{j}\displaystyle\sum^{\infty}_{k=1}c_{k}\<\varphi_{j},\varphi_{k}\>_{(M,P)}
=\displaystyle\sum^{\infty}_{j=1}|c_{j}|^{2}.$$
Thus $$||U||^{2}_{(M,P)}=\displaystyle\sum^{\infty}_{j=1}|c_{j}|^{2}$$
\end{proof}
The following result gives a variational characterization of the eigenvalues and a splitting of the space 
$[\mathbb{H}_{(M,P)}(\Omega)]^{\bot},$ (and, hence, of $H(\Omega)$ which will be needed in the proofs of the result on nonlinear problems.\\ \\
\textbf{Corollary 1}
 Assume that $A1,A2, S1, S2, M1, P1$ and assumption\ref{ASMP}, holds. Then we have the following.
\begin{description}
 \item[i] For all $U\in H(\Omega) ,$
\begin{equation}\label{Es10}
\begin{gathered}
\mu_{1}\left(\int_{\Omega}\<M(x)U,U\>\,dx+\int_{\partial\Omega}\<P(x)U,U\>\,dx\right)
\leq\int_{\Omega}|\triangledown U|^2\,dx+\int_{\Omega}\<A(x)U,U\>\,dx+\int_{\partial\Omega}\<\Sigma(x)U,U\>\,dx,
\end{gathered}
\end{equation}
where $\mu_{1} >0$ is the least Steklov-Robin eigenvalue for system (\ref{Es3}). If equality holds in (\ref{Es10}), then $U$ is a multiple of 
an eigenfunction of equation (\ref{Es3}) corresponding to $\mu_{1}$
 \item[ii] For every $V\in\oplus_{i\leq j}E(\mu_{i}),$ and $W\in\oplus_{i\geq j+1}E(\mu_{i}),$ we have that 
\begin{equation}\label{Es11}
\begin{gathered}
||V||^{2}_{(A,\Sigma)}\leq\mu_{j}||V||^{2}_{(M.P)} ~~{\rm and}~~||W||^{2}_{(A,\Sigma)}\geq\mu_{j+1}||W||^{2}_{(M,P)} 
\end{gathered}
\end{equation}
where $E(\mu_{i})$ is the $\mu_{i}$-eigenspace and $\oplus_{i\leq j}E(\mu_{i})$ is span of the eigenfunctions associated 
to eigenvalues up to $\mu_{j}$
\end{description}
\begin{proof}
 If $U=0$, then the inequality (\ref{Es10}) holds. otherwise, if  $0\neq U\in H(\Omega),$
then $U=U_{1}+U_{2},$ where $U_{1}\in[H_{(M,P)}(\Omega)]^{\bot} $, and $U_{2}\in H_{(M,P)}(\Omega).$
Therefore, by the $(A,\Sigma)-$orthogonality, and the characterization of
 $\mu_{1}$ (i.e.; $\mu_{1}||U_{1}||^{2}_{(M,P)}\leq||u_{1}||^{2}_{(A,\Sigma)}$) we get that 
$$\mu_{1}||U||^{2}_{(M,P)}=\mu_{1}\left(||U_{1}||^{2}_{(M,P)}+\cancel{||U_{2}||^{2}_{(M,P)}}^{0}\right)
\leq ||U_{1}||^{2}_{(A,\Sigma)}\leq ||U||^{2}_{(A,\Sigma)}$$.
Thus, the inequality (\ref{Es10}) holds. \\
Now assume we have that 
$$||U||^{2}_{(A,\Sigma)}=\mu_{1}||U||^{2}_{(M,P)}\implies\mu_{1}= \frac{||U||^{2}_{(A,\Sigma)}}{||u||^{2}_{(M,P)}}$$
we know that $\mu_{1}=\frac{||\varphi_{1}||^{2}_{(A,\Sigma)}}{||\varphi_{1}||^{2}_{(M,P)}},$
where $\varphi_{1}$ the eigenfunction corresponding to $\mu_{1}$,
Therefore, $U$ is a multiple of an eigenfunction of system (\ref{Es3}) corresponding to $\mu_{1}.$\\
The inequalities (\ref{Es10}) by Theorem\ref{ths} we have that 
$$||V||^{2}_{(A,\Sigma)}=\displaystyle\sum^{\infty}_{i=1}\mu_i|c_i|^2~~\forall~~ V\in\oplus_{i\leq j}E(\mu_{i})$$.
Now let  $\mu_{j}=\max\mu~\forall i\leq j,$ then we have that 
$$||V||^{2}_{(A,\Sigma)}=\displaystyle\sum^{\infty}_{i=1}\mu_i|c_i|^2\leq\max\mu\displaystyle\sum^{\infty}_{i=1}|c_i|^2
=\mu_{j}||v||^{2}_{(M,P)}~\forall~V\in\oplus_{i\leq j}E(\mu_{i})$$

$$||W||^{(A,\Sigma)}_{c}=\displaystyle\sum^{\infty}_{j=1}\mu_j|c_j|^2~~\forall W\in\oplus_{i\leq j}E(\mu_{i})$$.
Now let  $\mu_{j+1}=\min\mu~\forall~ i\geq j+1,$ then we have that 
$$||W||_{(A,\Sigma)}=\displaystyle\sum^{\infty}_{j=1}\mu_j|c_j|^2\geq\min\mu\displaystyle\sum^{\infty}_{j=1}|c_j|^2=
\mu_{j+1}||w||^{2}_{(M,P)}~\forall~W\in\oplus_{i\geq j+1}E(\mu_{i})$$

\end{proof}
The following proposition shows the principality of the first eigenvalue $\mu_{1}.$
\begin{proposition}\label{Ep1}
The first eigenvalue $\mu_{1}$ is simple if and only if the associated eigenfunction $\varphi_{1}$ does not changes sign (i.e.; 
$\varphi_{1}$ is strictly positive or strictly negative in $\Omega$ .
\end{proposition}
\begin{proof}
 Assume that the first eigenvalue $\mu_{1}$ is simple, we will show that associated eigenfunction $\varphi_{1}$ does not changes sign in $\Omega$, 
 suppose it does and let $\varphi_{1}=\varphi^{+}_{1}+\varphi^{-}_{1},$ where $\varphi^{+}_{1}=\max\{\varphi_{1},0\},$
 and $\varphi^{-}_{1}=\min\{0,\varphi_{1}\}$\\ 
If $\varphi_{1}\in H(\Omega)$. Then $\varphi^{+}_{1},\varphi^{-}_{1}\in H^{1}(\Omega)$
proof of that we know that $\varphi^{+}_{1}=\frac{1}{2}(\varphi_{1}+|\varphi_{1}|)$
clearly $\varphi^{+}_{1}\in L^{2}_{k}(\Omega)$, define 
$$V_{\epsilon}=(\varphi_{1}^{2}+\epsilon^{2})^{\frac{1}{2}}-\epsilon$$ 
$$|\varphi_{1}|=\displaystyle\lim_{\epsilon\to 0}V_{\epsilon},$$ we will show that 
$$D^{i}V_{\epsilon}=\frac{\varphi_{1}}{(\varphi_{1}^{2}+\epsilon^{2})^{\frac{1}{2}}}D_{i}\varphi_{1}\longrightarrow^{L^{2}_{k}(\Omega)}~sign D^{i}\varphi_{1}$$
$\forall\epsilon >0$, then $0 \leq V_{\epsilon}\leq |\varphi_{1}|$
since
 $$V_{\epsilon}^{2}=\left((\varphi_{1}^{2}+\epsilon^{2})^{\frac{1}{2}}-\epsilon\right)^{2}=\varphi_{1}^{2}+\epsilon^{2}-2(\varphi_{1}^{2}+\epsilon^{2})^{\frac{1}{2}}\epsilon+\epsilon^{2}
=\varphi_{1}^{2}+2\epsilon(\epsilon-(\varphi_{1}^{2}+\epsilon^{2})^{\frac{1}{2}})\leq\varphi_{1}^{2},$$
therefore  $V_{\epsilon}\leq |\varphi_{1}|$, 
$$\displaystyle\lim_{\epsilon\to 0}||\frac{\varphi_{1}}{(\varphi_{1}^{2}+\epsilon^{2})^{\frac{1}{2}}}D_{i}\varphi_{1}-sign D^{i}\varphi_{1}||_{L^{2}_{k}(\Omega)}=0$$
Therefore, 
$$\frac{\varphi_{1}}{(\varphi_{1}^{2}+\epsilon^{2})^{\frac{1}{2}}}D_{i}\varphi_{1}\longrightarrow^{L^{2}_{k}(\Omega)}~sign D^{i}\varphi_{1}$$

Thus, $\varphi_{1}^{+}\in H^{1}(\Omega)$, similar $\varphi_{1}^{-}\in H^{1}(\Omega)$\\
By the characterization of $\mu_{1}$ it follows that  
$$\<\varphi_{1},\varphi_{1}\>_{(A,\Sigma)}=\mu_{1}\<\varphi_{1},\varphi_{1}\>_{(M,P)},$$ since  $\varphi_{1}^{+}\in H(\Omega)$, and 
$\varphi_{1}^{-}\in H(\Omega),$ we have that 
$$\mu_{1}\<\varphi_{1}^{+},\varphi_{1}^{+}\>_{(M,P)}\leq\<\varphi_{1}^{+},\varphi_{1}^{+}\>_{(A,\Sigma)},$$
$$\mu_{1}\<\varphi_{1}^{-},\varphi_{1}^{-}\>_{(M,P)}\leq\<\varphi_{1}^{-},\varphi_{1}^{-}\>_{(A,\Sigma)}.$$ Therefore
$$0\leq~ \<\varphi_{1}^{+},\varphi_{1}^{+}\>_{(A,\Sigma)}+\<\varphi_{1}^{-},\varphi_{1}^{-}\>_{(A,\Sigma)}-
\mu_{1}\<\varphi_{1}^{+},\varphi_{1}^{+}\>_{(M,P)}
-\mu_{1}\<\varphi_{1}^{-},\varphi_{1}^{-}\>_{(M,P)}=$$$$
\<\varphi_{1}^{+}+\varphi_{1}^{-},\varphi_{1}^{+}+\varphi_{1}^{-}\>_{(A,\Sigma)}
- \mu_{1}\<\varphi_{1}^{+}+\varphi_{1}^{-},\varphi_{1}^{+}+\varphi_{1}^{-}\>_{(M,P)}
=\<\varphi_{1},\varphi_{1}\>_{(A,\Sigma)}-\mu_{1}\<\varphi_{1},\varphi_{1}\>_{(M,P)}=0.$$  
It follows that $\varphi_{1}^{+},$ and $\varphi_{1}^{-}$ are also eigenfunctions corresponding to $\mu_{1}$ we have that
  $\varphi_{1}^{+}>0~a.e~in~\Omega,$ and $\varphi_{1}^{-}<0~a.e~in~\Omega,$ which is impossible since $\mu_{1}$ it is simple. 
Thus $\varphi_{1}$ does not change sign in $\Omega.$\\
Assume $\varphi_{1}$ change sign, then $\varphi_{1}^{+},$ and $\varphi_{1}^{-}$ are also eigenfunctions corresponding to $\mu_{1}$ and 
they are linearly independent. Hence, $\mu_{1}$  is not simple.
On the other hand, suppose that $\mu_{1}$ is not simple, and let $\varphi$ and $\psi$ be two eigenfunctions corresponding 
to $\mu_{1}$ they are linearly independent. If $\varphi$ or  $\psi$ changes sign, then the proposition is proved. Otherwise, 
supposing without loss of generality that $\varphi$ and   $\psi$ positive, we will prove that there exists $a\in\mathbb{R}$ such that 
the eigenfunction (corresponding to $\mu_{1}$) $\varphi+a\psi$ changes sign. Indeed, suppose that, for all $\alpha\in\mathbb{R},$ 
$\varphi+\alpha\psi$ does not change.\\
Let the function $h:\mathbb{R}\to\mathbb{R}$ be define by 
$$h(\alpha)=\int\varphi+\alpha\int\psi.$$
Since $h$ is continuous, there exists $a\in\mathbb{R}$ such that 
$$h(a)=\int\varphi+a\int\psi=0.$$
Hence,  which contradicts the fact  $\varphi$ and $\psi,$ are linearly independent. 
Thus, $\varphi+a\psi,$ changes sign. The proof is complete. 
\end{proof}
\begin{remark}
Note that if we have smooth data and $\partial\Omega$ in proposition \ref{Ep1}, then the eigenfunction $\varphi_{1}(x)$ on $\partial\Omega$ as well, 
by the boundary point lemma (see for example \cite{Eva99}). or Hopf's Boundary Point Lemma.
\end{remark}

\end{proof}

\end{document}